\newtheorem{The}{Theorem}[section]
\newtheorem{Lem}[The]{Lemma}
\newtheorem{Prop}[The]{Proposition}
\newtheorem{Cor}[The]{Corollary}
\newtheorem{Prob}[The]{Problem}
\newtheorem{Def}[The]{Definition}
\newcommand{\K}{\mathcal{K}}
\newcommand{\C}{\mathbb{C}}
\newcommand{\R}{\mathbb{R}}
\newcommand{\Z}{\mathbb{Z}}
\newcommand{\E}{\mathcal{E}}
\newcommand{\F}{\mathcal{F}}
\begin{document}
 \title[ Monge-Amp\`ere equation]{Complex Monge-Amp\`ere equation in strictly pseudoconvex domains}

\setcounter{tocdepth}{1}

  \author{Hoang-Son Do} 
\address{Institute of Mathematics \\ Vietnam Academy of Science and Technology \\18
Hoang Quoc Viet \\Hanoi \\Vietnam}
\email{hoangson.do.vn@gmail.com , dhson@math.ac.vn}
\author{Thai Duong Do}
\address{Institute of Mathematics \\ Vietnam Academy of Science and Technology \\18
	Hoang Quoc Viet \\Hanoi \\Vietnam}
\email{dtduong@math.ac.vn}
\author{Hoang Hiep Pham}
\address{Institute of Mathematics \\ Vietnam Academy of Science and Technology \\18
	Hoang Quoc Viet \\Hanoi \\Vietnam}
\email{phhiep@math.ac.vn}
 \date{\today\\ This research is funded by Vietnam National Foundation for Science and Technology Development (NAFOSTED) under grant number 101.02-2017.306.}


\begin{abstract}
  We study the complex Monge-Amp\`ere equation $(dd^c u)^n=\mu$ in a strictly pseudoconvex domain $\Omega$ with the boundary condition $u=\varphi$, where
  $\varphi\in C(\partial\Omega)$. We provide a non-trivial sufficient condition for continuity of the solution $u$ outside 
  ``small sets''. 
\end{abstract}

\maketitle
\begin{center}
	{\it In honor of L\^e V\u{a}n Thi\^em's centenary} 
\end{center}
\tableofcontents
\newpage

\section*{Introduction}
 Let $\Omega\subset\C^n$ be a bounded domain. In this paper, we  always assume that $\mu$ is a Borel probability measure in
 $\Omega$  and suppose that $\varphi$ is a continuous function in $\partial\Omega$.
 The following result has been proven by Kolodziej \cite{Kol98}.
\begin{The}\label{Kol.the}
	Assume that $\Omega$ is strictly pseudoconvex. Consider an increasing function $h:\R\rightarrow (1, \infty)$ satisfying
	\begin{equation}\label{conditionh.eq}
	\int\limits_1^{\infty}(y h^{1/n}(y))^{-1}dy<\infty.
	\end{equation}
	If $\mu$ satisfies the inequality
	\begin{equation}\label{conditionmu.eq}
		\mu (K)\leq A Cap (K, \Omega) h^{-1}((Cap (K, \Omega))^{-1/n}),
	\end{equation}
for any $K\subset\Omega$ compact and regular then there exists a unique $u\in PSH(\Omega)\cap C(\overline{\Omega})$ such that
\begin{equation}\label{MAE}
\begin{cases}
(dd^c u)^n=\mu\ \mbox{in}\ \Omega,\\
u=\varphi\ \mbox{in}\ \partial\Omega.
\end{cases}
\end{equation}
Moreover, $\|u\|_{L^{\infty}}$ is bounded by a constant $B=B(h, A, \varphi, \Omega)$ which does not depend on $\mu$.
\end{The}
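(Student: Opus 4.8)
The plan is to split the statement into uniqueness, an a priori $L^{\infty}$ bound, and existence, the analytic heart being the bound. \emph{Uniqueness} is immediate from the Bedford--Taylor comparison principle: if $u_1,u_2\in PSH(\Omega)\cap L^{\infty}(\Omega)$ both solve \eqref{MAE}, then from $(dd^cu_1)^n=\mu\le (dd^cu_2)^n$ together with $u_1=u_2=\varphi$ on $\partial\Omega$ one gets $u_1\ge u_2$, and by symmetry $u_1=u_2$. For the rest I would first normalize by subtracting the maximal plurisubharmonic majorant $P\varphi$ of $\varphi$ (which, on a strictly pseudoconvex $\Omega$, is continuous on $\overline{\Omega}$ and equals $\varphi$ on $\partial\Omega$); this reduces everything to controlling how negative a solution can become, i.e.\ to bounding the sublevel sets $U(s):=\{u<-s\}$.

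\emph{Capacity--measure inequality.} The first key step is to show, for $s\ge 0$ and $0<t\le 1$,
\[
t^n\,Cap(U(s+t),\Omega)\le \mu(U(s)).
\]
I would derive this from the comparison principle by testing against $v=-s-t+t\psi$ with $\psi\in PSH(\Omega)$, $0\le\psi\le 1$: on $\partial\Omega$ one has $v\le -s<0=u$, so $\{u<v\}\Subset\Omega$, while the chain of inclusions $U(s+t)\subset\{u<v\}\subset U(s)$ together with $(dd^cv)^n=t^n(dd^c\psi)^n$ gives $t^n\int_{U(s+t)}(dd^c\psi)^n\le \mu(U(s))$; taking the supremum over $\psi$ yields the claim.

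\emph{The iteration.} Feeding hypothesis \eqref{conditionmu.eq} into this inequality, and writing $a(s):=Cap(U(s),\Omega)$ (a decreasing, right-continuous function, finite for $s\ge 1$ since $a(1)\le\mu(\Omega)=1$), one obtains
\[
t^n a(s+t)\le A\,a(s)\,h^{-1}\!\big(a(s)^{-1/n}\big).
\]
Starting from $s_0=1$, I would choose increments $t_j$ so that $a(s_{j+1})=\tfrac12 a(s_j)$; the inequality then forces $t_j\le (2A)^{1/n}h\big(a(s_j)^{-1/n}\big)^{-1/n}$, and with $y_j:=a(s_j)^{-1/n}=2^{j/n}y_0$ the sum $\sum_j t_j$ is comparable to the Riemann sum $\sum_j \big(y_j h^{1/n}(y_j)\big)^{-1}(y_{j+1}-y_j)$, hence to $\int_1^{\infty}(y h^{1/n}(y))^{-1}\,dy$, which is finite by \eqref{conditionh.eq}. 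Thus $s_\infty:=\sup\{s:a(s)>0\}\le 1+\sum_j t_j=:B$ is finite and, crucially, depends only on $h,A$ and the normalization data (so on $h,A,\varphi,\Omega$), \emph{not} on $\mu$. Since $u$ is upper semicontinuous, $Cap(U(s),\Omega)=0$ forces the open set $U(s)$ to be empty, so $u\ge -B$. This real-variable iteration, together with the verification that the threshold is exactly governed by \eqref{conditionh.eq} while staying independent of $\mu$, is where I expect the main difficulty.

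\emph{Existence, boundary values and continuity.} I would first solve \eqref{MAE} for measures with continuous density, where it is classical (the smooth strictly positive case via Caffarelli--Kohn--Nirenberg--Spruck, then approximation), obtaining continuous solutions. For general $\mu$ one approximates it by such measures $\mu_j$, solves to get $u_j$, and passes to the limit: the uniform bound $B$ above, a stability estimate (again controlled by $h$), and the comparison principle give convergence to $u\in PSH(\Omega)\cap C(\overline{\Omega})$ with $(dd^cu)^n=\mu$; alternatively one takes the Perron--Bremermann envelope of subsolutions and identifies $(dd^cu)^n=\mu$ by balayage. Finally, strict pseudoconvexity furnishes, via a strictly plurisubharmonic defining function, plurisubharmonic barriers at every boundary point, pinning $u$ between sub- and supersolutions equal to $\varphi$ on $\partial\Omega$; the Dini-type condition \eqref{conditionh.eq} moreover upgrades this to a global modulus of continuity, giving $u\in C(\overline{\Omega})$ with $u=\varphi$ on $\partial\Omega$.
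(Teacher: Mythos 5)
First, a point of orientation: the paper contains no proof of this statement at all --- it is quoted as background from Kolodziej \cite{Kol98} --- so the only meaningful comparison is with Kolodziej's original argument, and that is exactly the route you are reconstructing. Your treatment of the analytic heart, the a priori bound, is correct in substance: the capacity--measure inequality and the De Giorgi-type iteration converting \eqref{conditionmu.eq} plus the Dini condition \eqref{conditionh.eq} into a bound depending only on $h,A,\varphi,\Omega$ are precisely Kolodziej's key lemmas. Two small repairs are needed. After ``subtracting $P\varphi$'' the function $u-P\varphi$ is no longer plurisubharmonic, so you cannot literally test against $v=-s-t+t\psi$; the correct test object is $v=U(0,\varphi)-s-t+t\psi$, which \emph{is} plurisubharmonic, satisfies $(dd^cv)^n\geq t^n(dd^c\psi)^n$, and makes your inclusions $U(s+t)\subset\{u<v\}\subset U(s)$ and the compact containment $\{u<v\}\Subset\Omega$ go through verbatim. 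Also, since $a(s)=Cap(U(s),\Omega)$ may jump, exact halving $a(s_{j+1})=\tfrac12 a(s_j)$ need not be achievable; one should set $s_{j+1}=\inf\{s:a(s)\leq \tfrac12 a(s_j)\}$ and run the estimate at points slightly below $s_{j+1}$. (Your restriction $t\leq 1$ is in fact unnecessary: the comparison argument works for every $t>0$, which incidentally gives $a(s+t)\leq t^{-n}\mu(\Omega)$, so $a(s)\to 0$ and the iteration never stalls.)

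The genuine gap is in your final paragraph. Uniqueness (via the classical perturbation trick in the Bedford--Taylor comparison theorem) and the uniform bound are fine, but existence \emph{together with continuity} is the other substantial half of \cite{Kol98}, and you dispose of it by invoking ``a stability estimate (again controlled by $h$)'' that you neither state nor prove; no such off-the-shelf stability theorem was available at that stage (quantitative stability in this setting is later work), and Kolodziej instead proves uniform convergence of the approximating solutions directly by a capacity argument: if the oscillation $\sup|u_j-u_k|$ did not tend to zero, the sets where it persists would have capacity bounded below, contradicting the uniform estimates extracted from \eqref{conditionmu.eq}. Relatedly, your closing claim that \eqref{conditionh.eq} ``upgrades this to a global modulus of continuity'' is unsupported and strictly stronger than the theorem: no modulus depending only on $h,A$ is asserted (or known in this generality); what is proved is plain continuity, via uniform convergence of the approximants plus the boundary barriers you correctly describe. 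In short: right skeleton, essentially complete a priori estimate, but the continuity step as written is a placeholder rather than a proof.
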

Here, $\Omega$ is called  strictly pseudoconvex domain iff
\begin{center}
	$\Omega=\{z\in\C^n:\rho (z)<0 \}$,
\end{center} 
where $\rho\in C^2(\overline{\Omega})$ satisfies $\nabla\rho (z)\neq 0$ for every $z\in\partial\Omega$ and 
$dd^c \rho\geq a\omega=:a dd^c |z|^2$ in $\Omega$ for some $a>0$.

Some plurisubharmonic functions are not continuous in the whole $\Omega$ even though they are continuous outside an analytic set. For example,
$u=-(-\log |z|)^{1/2}$ is not continuous in the whole unit ball $B$, but it is continuous in 
$B\setminus\{0\}$. We are interested in the following problem.
\begin{Prob}
	 Find conditions for $\mu$ such that $u$ is continuous outside an analytic set $E$ but $u$ may not be continuous in $\Omega$.
\end{Prob}
The purpose of this article is to provide a sufficient condition for the continuity of $u$ outside ``small sets'' and consequently,
provide a sufficient condition for the continuity of $u$ outside an analytic set. Our main result is the following.
\begin{The}\label{main}
	Suppose that $\Omega$ is strictly pseudoconvex and $v\in\E (\Omega)$. 
Assume that there exists a sequence $\{M_j\}_{j=1}^{\infty}$ of postive real numbers with $\lim\limits_{j\to\infty}M_j=\infty$
such that
\begin{itemize}
	\item [(i)] For any $j\in\Z^+$, $\chi_{U_j}\mu\leq \dfrac{1}{2^j}\chi_{U_j}(dd^c v)^n$, where 
	$U_j=\{z\in\Omega| v(z)<-M_j\}$.
	\item[(ii)] For any $j\in\Z^+$, there exist $h=h_j, A=A_j$ satisfying \eqref{conditionh.eq} and 
	\eqref{conditionmu.eq} for every compact set $K\subset V_j:=\Omega\setminus U_j$.	
\end{itemize}
Then, there exists a unique function $u$ satisfying
\begin{equation}\label{GMAE}
\begin{cases}
u\in \F(\varphi, \Omega),\\
(dd^cu)^n=\mu,\\
u\in C(V_j), \forall j\in\Z^+.
\end{cases}
\end{equation}
Moreover, for each $j\in\Z^+$, for any $z\in\partial\Omega\cap\overline{V}_j$,
\begin{center}
	$\lim\limits_{V_j\ni\xi\to z}u(\xi)=\varphi (z).$
\end{center}
\end{The}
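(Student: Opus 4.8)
The plan is to combine Kołodziej's theorem (Theorem \ref{Kol.the}) on the tame pieces $V_j$ with the domination hypothesis (i) through one comparison‑function trick; the auxiliary function $u_k+2^{-k/n}v$ will be the heart of the argument. First I would settle existence and uniqueness of the solution in the class. Since $V_j=\{v\ge -M_j\}$ is relatively closed, (ii) together with the Bedford--Taylor fact that pluripolar sets are exactly the sets of zero relative capacity shows, via \eqref{conditionmu.eq} and the fact that \eqref{conditionh.eq} forces the right‑hand side of \eqref{conditionmu.eq} to tend to $0$ as $Cap(\cdot,\Omega)\to0$, that $\mu$ puts no mass on pluripolar compacts contained in some $V_j$. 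As $\{v=-\infty\}=\bigcap_jU_j$ is pluripolar and $(dd^cv)^n$ charges no pluripolar set, hypothesis (i) gives $\mu(\{v=-\infty\})=0$; hence $\mu$ vanishes on all pluripolar sets. Being a finite measure, $\mu$ then admits, by the theory of the Cegrell classes, a unique $u\in\F(\varphi,\Omega)$ with $(dd^cu)^n=\mu$, uniqueness being the comparison principle in $\F(\varphi,\Omega)$. It remains to prove that this $u$ is continuous on each $V_j$ and has the stated boundary limits.

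Next I would build tame approximants by setting $\mu_k:=\chi_{V_k}\mu$. For every compact $K\subset\Omega$ the set $K\cap V_k$ is a compact subset of $V_k$, so (ii) and the monotonicity of the right‑hand side of \eqref{conditionmu.eq} in $Cap(\cdot,\Omega)$ show that $\mu_k$ satisfies \eqref{conditionmu.eq} globally with the data $h_k,A_k$. By Theorem \ref{Kol.the} (which applies equally to finite measures satisfying \eqref{conditionmu.eq}) there is a continuous $u_k\in PSH(\Omega)\cap C(\overline{\Omega})$ with $(dd^cu_k)^n=\mu_k$ and $u_k=\varphi$ on $\partial\Omega$. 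Since $\mu_k\le\mu_{k+1}\le\mu$ and $\mu_k\nearrow\mu$ (because $\mu(\{v=-\infty\})=0$), the comparison principle makes $u_k$ decreasing and bounded below by $u$; its limit lies in $\F(\varphi,\Omega)$, solves $(dd^c\cdot)^n=\mu$ by continuity of the operator along decreasing sequences, and therefore equals $u$ by uniqueness, so $u_k\searrow u$.

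The crux is a two‑sided estimate on $V_j$. Using $(dd^c(a+b))^n\ge(dd^ca)^n+(dd^cb)^n$ and $\big(dd^c(2^{-k/n}v)\big)^n=2^{-k}(dd^cv)^n$, I compute that on $V_k$ one has $(dd^c(u_k+2^{-k/n}v))^n\ge\mu_k+2^{-k}(dd^cv)^n=\mu=(dd^cu)^n$, while on $U_k$ hypothesis (i) gives $(dd^c(u_k+2^{-k/n}v))^n\ge 2^{-k}(dd^cv)^n\ge\mu=(dd^cu)^n$; thus the inequality holds on all of $\Omega$. As $u_k+2^{-k/n}v$ and $u$ share the boundary data $\varphi$, the comparison principle yields $u\ge u_k+2^{-k/n}v$. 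Exchanging the roles of $u$ and $u_k$ and using $\mu_k=0$ on $U_k$ gives, in the same way, $u_k\ge u+2^{-k/n}v$. Restricting to $V_j$, where $v\ge -M_j$, these combine to $\|u-u_k\|_{L^\infty(V_j)}\le 2^{-k/n}M_j$. Hence $u_k\to u$ uniformly on $V_j$ and, each $u_k$ being continuous on $\overline{\Omega}$, $u\in C(V_j)$; moreover for $z\in\partial\Omega\cap\overline{V}_j$ the bound $|u(\xi)-\varphi(z)|\le 2^{-k/n}M_j+|u_k(\xi)-u_k(z)|$ valid for $\xi\in V_j$ lets $k\to\infty$ after $\xi\to z$, giving $\lim_{V_j\ni\xi\to z}u(\xi)=\varphi(z)$.

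The main obstacle is making these comparison steps rigorous inside the Cegrell classes: one must check that $u_k+2^{-k/n}v\in\E$ carries boundary data $\varphi$ and lies in a class where Cegrell's comparison principle applies (this is precisely where $v\in\E$, rather than merely $v\in PSH(\Omega)$, is used), and one must verify carefully the global validity of \eqref{conditionmu.eq} for $\mu_k$, including the passage from the ``compact and regular'' sets in (ii) to arbitrary compacts. Once these technical points are in place, the single estimate $\|u-u_k\|_{L^\infty(V_j)}\le 2^{-k/n}M_j$ delivers simultaneously the continuity of $u$ on each $V_j$ and the boundary limits along $\overline{V}_j$.
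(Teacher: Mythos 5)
Your proposal is, at its core, the same proof as the paper's: show that $\mu$ vanishes on all pluripolar sets and obtain the unique $u\in\F(\varphi,\Omega)$ from Theorem \ref{Ahag.the}; solve $(dd^cu_k)^n=\chi_{V_k}\mu$ with $u_k=\varphi$ on $\partial\Omega$ via Theorem \ref{Kol.the}; establish the sandwich $u_k+2^{-k/n}v\le u\le u_k$ by the comparison principle of Theorem \ref{the.compaaccp}; and convert it, on $V_j$ where $v\ge -M_j$, into $\|u-u_k\|_{L^\infty(V_j)}\le 2^{-k/n}M_j$, from which continuity on each $V_j$ and the boundary limits follow exactly as in the paper. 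On one point you are in fact more careful than the paper: the monotonicity of $t\mapsto t\,h^{-1}(t^{-1/n})$, needed to pass \eqref{conditionmu.eq} from the compact $K\cap V_k$ to an arbitrary compact $K$ when verifying that $\mu_k=\chi_{V_k}\mu$ satisfies Kolodziej's hypothesis globally, is hidden in the paper behind ``it is easy to check''.

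Two side remarks in your write-up are wrong as stated, though both are inessential to the argument. First, $(dd^cv)^n$ can very well charge pluripolar sets when $v\in\E(\Omega)$ — e.g. $v=\log|z|$ lies in $\F$ of the unit ball and $(dd^cv)^n$ is a Dirac mass at the origin — so you cannot deduce $\mu(\{v=-\infty\})=0$ from that claim. The correct route, which is the paper's, needs only hypothesis (i): for every compact $K\subset\Omega$ one has $\mu(\{v=-\infty\}\cap K)\le \mu(U_j\cap K)\le 2^{-j}\int_K(dd^cv)^n$, which is finite since $(dd^cv)^n$ is a Radon measure for $v\in\E$, and letting $j\to\infty$ gives zero. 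Second, your assertion that $u_k$ decreases to $u$ presupposes $\mu_k\le\mu_{k+1}$, i.e. $V_k\subset V_{k+1}$, which would require $M_k\le M_{k+1}$; the hypothesis only says $M_j\to\infty$, not that the sequence is monotone. Fortunately you never use this monotone convergence: the two-sided comparison estimate alone yields uniform convergence of $u_k$ to $u$ on each $V_j$, which is all the paper uses as well.
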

\begin{Cor}\label{main.cor}
Assume that the assumption of Theorem \ref{main} is satisfied. If 
	there exist $\alpha\in (0, 1)$, $\lambda_1, ..., \lambda_m>0$ and analytic functions $f_1, ..., f_m\in\mathcal{A}(\C^n)$
	such that $v=-(-\log (|f_1|^{\lambda_1}+...+|f_m|^{\lambda_m}))^{\alpha}$ in $\Omega$ then 
	$u\in C(\Omega\setminus F)$, where $F=\{f_1=f_2=...=f_m=0\}$. Moreover, for any $z\in\partial\Omega\setminus F$,
	\begin{center}
		$\lim\limits_{\Omega\setminus F\ni\xi\to z}u(\xi)=\varphi (z).$
	\end{center}
\end{Cor}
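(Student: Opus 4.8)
The plan is to exploit that, although the given $v$ may fail to be continuous on all of $\Omega$, it is continuous and real-valued precisely on $\Omega\setminus F$, and that its sublevel sets coincide with sublevel sets of a genuinely continuous function on $\C^n$. Write $g:=|f_1|^{\lambda_1}+\dots+|f_m|^{\lambda_m}$. Since each $f_i\in\mathcal{A}(\C^n)$ is entire, $g$ is continuous and nonnegative on $\C^n$ and vanishes exactly on $F=\{f_1=\dots=f_m=0\}$; in particular $F$ is closed and analytic. Under the normalization $g\le 1$ on $\overline{\Omega}$ that is implicit in the hypothesis that $v=-(-\log g)^{\alpha}\in\E(\Omega)$ is a real-valued function of the stated form, I first record that the scalar map $t\mapsto -(-\log t)^{\alpha}$ is continuous and strictly increasing from $(0,1)$ onto $(-\infty,0)$. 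Consequently $v$ is continuous on $\Omega\setminus F=\{g>0\}$, tends to $-\infty$ as one approaches $F$, and for each $j$
\[
U_j=\{v<-M_j\}=\{g<s_j\},\qquad V_j=\{v\ge -M_j\}=\{g\ge s_j\},\qquad s_j:=\exp(-M_j^{1/\alpha}),
\]
with $s_j\in(0,1)$ and $s_j\downarrow 0$. The point of this reformulation is that each $W_j:=\{g>s_j\}$ is open in $\Omega$ (indeed in $\C^n$) and satisfies $W_j\subset V_j$.

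Next I would prove interior continuity. Since $s_j\downarrow 0$ and $g>0$ off $F$, the open sets $\{W_j\}_{j}$ form an open cover of $\Omega\setminus F$. By Theorem \ref{main} we have $u\in C(V_j)$, so the restriction of $u$ to the open set $W_j\subset V_j$ is continuous on $W_j$. A function that is continuous on each member of an open cover is continuous on the union, whence $u\in C(\Omega\setminus F)$. The essential gain over Theorem \ref{main} is exactly the passage from ``continuous on the closed set $V_j$'' to ``continuous on the genuinely open sets $W_j$'': this is what guarantees that no sequence in $\Omega\setminus F$ converging to an interior point can escape every $V_j$, and it is precisely here that continuity of $g$, hence of $v$ on $\Omega\setminus F$, is used.

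Finally, for the boundary behaviour, fix $z\in\partial\Omega\setminus F$, so $g(z)>0$. Choose $j$ with $s_j<g(z)$; by continuity of $g$ on $\C^n$ there is a ball $N\ni z$ on which $g>s_j$, so that $N\cap\Omega\subset W_j\subset V_j$ and in particular $z\in\overline{V}_j\cap\partial\Omega$. Any $\xi\in\Omega\setminus F$ sufficiently close to $z$ lies in $N\cap\Omega\subset V_j$, so that an approach through $\Omega\setminus F$ is eventually an approach through $V_j$; the boundary-limit assertion of Theorem \ref{main} then gives $\lim\limits_{\Omega\setminus F\ni\xi\to z}u(\xi)=\varphi(z)$. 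I expect the only real subtlety to be the bookkeeping around the normalization $g\le 1$ and the verification of continuity and monotonicity of the scalar profile $t\mapsto -(-\log t)^{\alpha}$; the topological core—rewriting the level sets of $v$ through the continuous function $g$ and covering $\Omega\setminus F$ by open sets contained in the $V_j$—is the heart of the argument and is otherwise routine.
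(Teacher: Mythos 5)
Your proof is correct and follows essentially the route the paper intends: the paper treats the corollary as an immediate consequence of Theorem \ref{main}, with the sets $V_j=\{v\geq -M_j\}$ recognized (exactly as in your reformulation via $g=|f_1|^{\lambda_1}+\dots+|f_m|^{\lambda_m}$ and $s_j=\exp(-M_j^{1/\alpha})$) as relatively closed sets whose open interiors $\{g>s_j\}$ exhaust $\Omega\setminus F$, so that continuity of $u$ on each $V_j$ and the boundary-limit statement of Theorem \ref{main} yield the claims. Your explicit bookkeeping of the level sets and of the boundary approach through $N\cap\Omega\subset V_j$ is a faithful, complete write-up of this argument.
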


\section{Preliminaries}
For the convenience, throughout this section, unless otherwise specified, we assume that $\Omega$ is a hyperconvex domain.
\subsection{Some classes of plurisubharmonic functions}
We recall the definition of some classes of plurisubharmonic functions. The following classes were first introduced by Cegrell.
\cite{Ceg98, Ceg04}
\begin{Def} 
$\E_0(\Omega)=\{u\in PSH(\Omega)\cap L^\infty (\Omega): \lim_{z\to\partial\Omega}u(z)=0, \int_\Omega (dd^c u)^n<\infty\},$\\
$\F(\Omega)=\{u\in PSH(\Omega): \exists \, \{u_j\}\subset \E_0(\Omega), \; u_j\searrow u, \, \sup_{j}\int_{\Omega} 
(dd^cu_j)^n<\infty\},$\\
$\E(\Omega)=\{u\in PSH(\Omega): \forall\, \omega\Subset \Omega \,\, \exists\, u_{\omega}\in \F(\Omega) \text{ such that } u_{\omega}=u \text { on } \omega \},$\\
$\mathcal{N}(\Omega)=\{u\in\E(\Omega):$ the smallest maximal plurisubharmonic majorant $=0 \}$.
\end{Def}
It is clearly that $\E_0\subset\F\subset\mathcal{N}\subset\E$. In the case where $\Omega$ is hyperconvex, Cegrell has shown
that $\E(\Omega)=\mathcal{D}(\Omega)\cap PSH^-(\Omega)$, 
where $\mathcal{D}(\Omega)$ is the biggest subclass of PSH($\Omega$) 
where the Monge-Amp\`ere operator is 
well-defined. In the general case, there is a charateristic of $\mathcal{D}(\Omega)$ introduced by Blocki \cite{Blo06}.

The classes $\E_0, \F, \mathcal{N}$ can be generalized as following 
(see \cite{Ceg98}, \cite{Aha07}).
\begin{Def}\label{def2} 
	i) Let $H$ be a maximal plurisubharmonic function in $\Omega$. For $\K\in\{\E_0, \F, \mathcal{N}\}$, we denote
	\begin{center}
		$\K (H)=K(H, \Omega)=\{u\in PSH(\Omega): \exists\phi\in\K, H\geq u\geq \phi+H\}.$
	\end{center}	
	ii)  Let $f\in C(\partial\Omega)$. For $\K\in\{\E_0, \F, \mathcal{N}\}$, we denote
	\begin{center}
		$\K (f)=K(f, \Omega)=\{u\in PSH(\Omega): \exists\phi\in\K, U(0, f)\geq u\geq \phi+U(0, f)\},$
	\end{center}
where $U(0, f)$ is the unique solution of
\begin{equation}\label{max.eq}
\begin{cases}
	U(0, f)\in PSH (\Omega)\cap L^{\infty}(\overline{\Omega}),\\
	(dd^c U(0, f))^n=0\ \mbox{in}\ \Omega,\\
	U(0, f)=f\ \mbox{in}\ \partial\Omega.
\end{cases}
\end{equation}
\end{Def}
Note that, if $\Omega$ is strictly pseudoconvex then $U(0, f)$ is always continuous \cite{BT76}.
\subsection{The solution of Monge-Amp\`ere equation}
The complex Monge-Amp\`ere equation is an important object in the pluripotential theory
with many interesting related results. We recall some useful results for proving our main theorem. 
\begin{The}\cite{Ceg04} If $\mu$ vanishes on all pluripolar sets then there exists a unique solution $u\in\F (\Omega)$ of the 
	equation $(dd^c u)^n=\mu$.
\end{The}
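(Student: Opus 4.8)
The plan is to establish uniqueness and existence separately, the former being an immediate consequence of the comparison principle for Cegrell classes and the latter requiring an approximation by measures of bounded density. For uniqueness, suppose $u_1,u_2\in\F(\Omega)$ both solve $(dd^cu_i)^n=\mu$. I would invoke the comparison principle for $\F(\Omega)$, in the form $\int_{\{u<v\}}(dd^cv)^n\le\int_{\{u<v\}}(dd^cu)^n$ valid for all $u,v\in\F(\Omega)$. Testing the two solutions against each other and using that they carry the same Monge-Amp\`ere measure $\mu$ forces the sets $\{u_1<u_2\}$ and $\{u_2<u_1\}$ to be negligible for the comparison, and the standard domination argument then upgrades this to $u_1=u_2$ everywhere.

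For existence, the strategy is to reduce to Dirichlet problems with bounded density. Since $\mu$ vanishes on pluripolar sets and $\mu(\Omega)=1<\infty$, I would first apply Cegrell's decomposition to write $\mu=f\,(dd^c\psi)^n$ for some $\psi\in\E_0(\Omega)$ and some $0\le f\in L^1\big((dd^c\psi)^n\big)$. Truncating the density, set $\mu_j=\min(f,j)\,(dd^c\psi)^n$, so that $\mu_j\nearrow\mu$ by monotone convergence while each $\mu_j$ is dominated by $j\,(dd^c\psi)^n$. Because $\mu_j$ has density bounded by $j$ with respect to the fixed measure $(dd^c\psi)^n$ of a function in $\E_0(\Omega)$, the problem $(dd^cu_j)^n=\mu_j$ admits a solution $u_j\in\F(\Omega)$, which can moreover be taken in $\E_0(\Omega)$ and bounded below by a multiple of $\psi$. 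The monotonicity $\mu_j\le\mu_{j+1}$ together with the comparison principle yields $u_{j+1}\le u_j$, so $\{u_j\}$ is decreasing, and the masses are uniformly controlled by $\int_\Omega(dd^cu_j)^n=\mu_j(\Omega)\le\mu(\Omega)=1$.

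I would then pass to the limit $u=\lim_{j\to\infty}u_j$. The uniform mass bound guarantees $u\not\equiv-\infty$, and by the very definition of $\F(\Omega)$ as the class of decreasing limits of $\E_0(\Omega)$-functions with uniformly bounded Monge-Amp\`ere mass, we obtain $u\in\F(\Omega)$. To identify its Monge-Amp\`ere measure I would apply the continuity of the operator along decreasing sequences in $\F(\Omega)$ with uniformly bounded total mass, giving $(dd^cu_j)^n\to(dd^cu)^n$ weakly; combined with $(dd^cu_j)^n=\mu_j\nearrow\mu$ this forces $(dd^cu)^n=\mu$, completing the construction.

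The main obstacle lies in two places, both genuinely nontrivial. The first is the decomposition $\mu=f\,(dd^c\psi)^n$: one must produce a single $\psi\in\E_0(\Omega)$ whose Monge-Amp\`ere measure absolutely dominates $\mu$, and this is exactly the step where the hypothesis that $\mu$ charges no pluripolar set is essential. The second is the convergence $(dd^cu_j)^n\to(dd^cu)^n$ for decreasing sequences that are \emph{not} uniformly bounded; the continuity of the operator in this generality, going beyond the locally bounded framework of Bedford-Taylor, is the technical heart of the argument and is precisely what the class $\F(\Omega)$ is designed to furnish.
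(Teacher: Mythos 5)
The paper offers no proof of this statement at all: it is quoted as background, with the proof residing in \cite{Ceg04}. Your existence argument in fact reconstructs Cegrell's original route essentially step by step: the decomposition $\mu=f\,(dd^c\psi)^n$ with $\psi\in\E_0(\Omega)$ and $f\in L^1((dd^c\psi)^n)$ (Cegrell's decomposition theorem, which is where the hypothesis on pluripolar sets enters the construction), truncation $\mu_j=\min(f,j)\,(dd^c\psi)^n$, solvability for bounded densities with $u_j\in\E_0(\Omega)$ and $u_j\geq j^{1/n}\psi$, monotonicity of $\{u_j\}$ via comparison, the uniform bound $\int_\Omega(dd^cu_j)^n=\mu_j(\Omega)\leq 1$, and Cegrell's convergence theorem for decreasing sequences in $\E(\Omega)$ to identify $(dd^cu)^n=\mu$. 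One small point you assert without justification is that the mass bound forces $u\not\equiv-\infty$; this is true, but it deserves a word, e.g.\ via the capacity estimate $t^n\,Cap(\{u_j<-t\},\Omega)\leq\int_\Omega(dd^cu_j)^n$, which gives a uniform obstruction to $u_j\to-\infty$.

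The genuine flaw is in your uniqueness step. The comparison principle in the naive form $\int_{\{u<v\}}(dd^cv)^n\leq\int_{\{u<v\}}(dd^cu)^n$ is \emph{not} valid for arbitrary $u,v\in\F(\Omega)$: the correct general statement (Theorem \ref{the.npcompa} above, from \cite{NP09, ACCP09}) has the right-hand side taken over $\{u<v\}\cup\{u=v=-\infty\}$, and the discrepancy on the pluripolar set $\{u=v=-\infty\}$ is precisely why uniqueness in $\F(\Omega)$ fails for measures charging pluripolar sets --- the phenomenon studied in \cite{ACCP09}. If your unrestricted form held, your argument would prove uniqueness for \emph{every} finite measure, which is false; so the hypothesis that $\mu$ vanishes on pluripolar sets is needed twice, in the decomposition \emph{and} in the uniqueness, and your sketch only invokes it for the former. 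The repair is immediate from results quoted in this paper: since $\F(\Omega)\subset\mathcal{N}(\Omega)$ and both solutions are nonpositive, Theorem \ref{the.compaaccp} applied with $H=0$ in both directions (using that $(dd^cu_1)^n=(dd^cu_2)^n=\mu$ vanishes on pluripolar sets) yields $u_1\geq u_2$ and $u_2\geq u_1$ directly, without any ``standard domination argument.'' With that substitution your proof is complete and coincides with the standard one.
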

\begin{The}\label{Ahag.the} \cite{Aha07}
	If $\mu$ vanishes on all pluripolar sets and $U(0, f)\in C(\overline{\Omega})$
	then there exists a unique solution $u\in\F (\varphi, \Omega)$ of the 
	equation $(dd^c u)^n=\mu$.
\end{The}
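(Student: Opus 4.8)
The plan is to reduce the problem to the existence result in $\F(\Omega)$ proved just above (Cegrell) and then to repair the boundary behaviour by an upper-envelope (balayage) construction over the continuous maximal function $H:=U(0,\varphi)$. First I would invoke Cegrell's theorem: since $\mu$ puts no mass on pluripolar sets, there is a unique $\psi\in\F(\Omega)$ with $(dd^c\psi)^n=\mu$. Because $\psi\le 0$ and $(dd^c(\psi+H))^n\ge(dd^c\psi)^n=\mu$ (expand the wedge and discard the non-negative mixed terms, using $(dd^cH)^n=0$), the function $\psi+H$ is a \emph{subsolution} lying below $H$. This makes the Perron family
\[
\mathcal B=\{v\in PSH(\Omega): v\le H,\ (dd^cv)^n\ge\mu\}
\]
non-empty, and I would set $u:=(\sup_{v\in\mathcal B}v)^*$.

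From the sandwich $\psi+H\le u\le H$ with $\psi\in\F(\Omega)$ it follows at once that $u\in\F(\varphi,\Omega)$, so the only real content is the identity $(dd^cu)^n=\mu$. For the inequality $(dd^cu)^n\ge\mu$ I would use that $\mathcal B$ is stable under $\max$ (via $(dd^c\max(v,w))^n\ge\mathbf 1_{\{v>w\}}(dd^cv)^n+\mathbf 1_{\{v\le w\}}(dd^cw)^n$) and, after replacing each competitor $v$ by $\max(v,\psi+H)$, may be taken inside $\F(\varphi,\Omega)$; a Choquet increasing sequence together with the stability of the subsolution property under upper envelopes in Cegrell's pluripotential theory then yields $(dd^cu)^n\ge\mu$.

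The harder half is $(dd^cu)^n\le\mu$, which I would split along the contact set. On $\{u=H\}$ the standard fact that the Monge--Amp\`ere measures of two plurisubharmonic functions agree on the set where they coincide gives $\mathbf 1_{\{u=H\}}(dd^cu)^n=\mathbf 1_{\{u=H\}}(dd^cH)^n=0$, and since $(dd^cu)^n\ge\mu\ge0$ this forces both $\mu$ and $(dd^cu)^n$ to vanish there. On $\{u<H\}$ I would run a local balayage: on a small ball $B\Subset\{u<H\}$ on which $u$ is bounded, solve $(dd^c\tilde u)^n=\mu$ in $B$ with $\tilde u=u$ on $\partial B$; the comparison principle gives $u\le\tilde u\le H$, hence $\tilde u\in\mathcal B$ and so $\tilde u\le u$, i.e. $\tilde u=u$ and $(dd^cu)^n=\mu$ on $B$. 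I expect the main obstacle to lie precisely here: controlling the possible extra mass of $(dd^cu)^n$ on the pluripolar set $\{u=-\infty\}$, where $u$ fails to be locally bounded and the balls do not reach. This is exactly where the hypothesis that $\mu$ vanishes on pluripolar sets must enter, through the domination principle, to show that the envelope inherits this property and that no singular mass survives.

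Finally, uniqueness would follow from the comparison principle for the class $\F(\varphi,\Omega)$: if $u_1,u_2\in\F(\varphi,\Omega)$ both solve $(dd^cu)^n=\mu$, then, using again that $\mu$ charges no pluripolar set so that the domination principle applies, comparison in both directions gives $u_1\le u_2$ and $u_2\le u_1$, whence $u_1=u_2$.
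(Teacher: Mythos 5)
Note first that the paper does not prove this statement at all: it is quoted verbatim from [Aha07], so your proposal can only be measured against the known proofs in the literature. Your skeleton is the right one in outline --- Cegrell's theorem producing $\psi\in\F(\Omega)$ with $(dd^c\psi)^n=\mu$, the subsolution $\psi+H$ with $H=U(0,\varphi)$, the sandwich $\psi+H\le u\le H$ giving $u\in\F(\varphi,\Omega)$, and uniqueness via the comparison principle of [ACCP09] (Theorem \ref{the.compaaccp}(ii), after normalizing $H\le 0$) --- but two steps in the middle are genuinely defective.

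The ``standard fact'' you invoke on the contact set is false: Monge--Amp\`ere measures of two psh functions do \emph{not} agree where the functions coincide. For instance $v=\max(\mathrm{Re}\,z_1,0)$ and $u=\max(2\,\mathrm{Re}\,z_1,0)$ coincide on the closed set $\{\mathrm{Re}\,z_1\le 0\}$, yet their Laplacian-type masses on the hypersurface $\{\mathrm{Re}\,z_1=0\}$ inside that set differ by a factor of $2$. What is true is only the one-sided inequality: if $u\le v$ are (say, bounded) psh, then $\mathbf{1}_{\{u=v\}}(dd^cu)^n\le\mathbf{1}_{\{u=v\}}(dd^cv)^n$, proved by letting $\max(u,v-\epsilon)\nearrow v$ and using plurifine locality on $\{u>v-\epsilon\}\supset\{u=v\}$. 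By luck this is exactly the direction you need ($(dd^cu)^n\le(dd^cH)^n=0$ on $\{u=H\}$), but you must quote the correct statement and justify it for your unbounded $u\in\F(\varphi,\Omega)$ via locality in the Cegrell classes; as written the step rests on a false lemma.

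The more serious gap is the balayage. You run it only on balls $B\Subset\{u<H\}$ ``on which $u$ is bounded'', and you locate the danger in extra mass on $\{u=-\infty\}$, ``where the balls do not reach''. The problem is worse: the set of points admitting such a ball can be \emph{empty}. A function in $\F(\Omega)$ can have a dense polar set (e.g.\ a suitably normalized $\sum_k 2^{-k}\log|z-q_k|$ with $\{q_k\}$ dense), and then $u$ is locally bounded nowhere, so your argument certifies $(dd^cu)^n=\mu$ on the empty open set and the domination principle has nothing to work with. No mechanism in your proposal transfers the equation across the non-locally-bounded locus, and this is precisely where the real work lies. The known repairs are either to solve the local Dirichlet problems in Cegrell classes with unbounded psh boundary data, or --- as in the cited reference --- to abandon Perron entirely for a monotone scheme: set $\mu_j:=\chi_{\{\psi>-j\}}\mu\le\bigl(dd^c\max(\psi,-j)\bigr)^n$, solve the resulting problems with bounded subsolutions to get $u_j\in\F(\varphi,\Omega)$ decreasing, note $u_j\ge\psi+H$, and use the continuity of the Monge--Amp\`ere operator along monotone sequences in these classes to pass to the limit. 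Two smaller points: your family $\mathcal{B}$ must from the outset be restricted to functions on which $(dd^c\cdot)^n$ is defined (your max-replacement by $\max(v,\psi+H)$ accomplishes this, so build it into the definition), and the gluing of $\tilde u$ with $u$ across $\partial B$ needs a word about possible $\mu$-mass on $\partial B$.
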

\begin{The}\cite{ACCP09}
	If there exists a function $v\in\E (\Omega)$ such that $(dd^c v)^n\geq\mu$ then for every maximal plurisubharmonic function
	$H\in\E(\Omega)$, there exists a function $u\in\E(\Omega)$ such that $v+H\leq u\leq H$ and $(dd^cu)^n=\mu$.
\end{The}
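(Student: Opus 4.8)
The plan is to construct $u$ as a Perron--Bremermann upper envelope of subsolutions lying below $H$, and then to verify the two one-sided Monge--Amp\`ere inequalities separately. First I would check that $v+H$ is itself an admissible subsolution: since $dd^cv\geq 0$ and $dd^cH\geq 0$, every mixed term in the expansion of $(dd^c(v+H))^n$ is nonnegative, so $(dd^c(v+H))^n\geq (dd^cv)^n\geq\mu$; and because $v\in\E(\Omega)\subset PSH^-(\Omega)$ we have $v\leq 0$, hence $v+H\leq H$. Thus $v+H\in\E(\Omega)$ is a subsolution below $H$, and the family
\[\mathcal{S}=\{\phi\in PSH(\Omega):\phi\leq H,\ (dd^c\phi)^n\geq\mu\}\]
is nonempty. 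Setting $u=\sup\{\phi:\phi\in\mathcal{S}\}$ and letting $u^*$ be its upper semicontinuous regularization, the competitor $v+H\in\mathcal{S}$ gives $v+H\leq u^*$, while $\phi\leq H$ for every $\phi$ gives $u^*\leq H$. The sandwich $v+H\leq u^*\leq H\leq 0$ with $v+H\in\E(\Omega)$ forces $u^*\in\E(\Omega)$, so in particular $u^*\not\equiv-\infty$ and $(dd^cu^*)^n$ is well defined.

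The second step is the inequality $(dd^cu^*)^n\geq\mu$. The key point is that $\mathcal{S}$ is stable under taking maxima: for $\phi_1,\phi_2\in\mathcal{S}$ one has
\[(dd^c\max(\phi_1,\phi_2))^n\geq\mathbf{1}_{\{\phi_1\geq\phi_2\}}(dd^c\phi_1)^n+\mathbf{1}_{\{\phi_1<\phi_2\}}(dd^c\phi_2)^n\geq\mu,\]
so $\max(\phi_1,\phi_2)\in\mathcal{S}$. By Choquet's lemma I may therefore pick a sequence in $\mathcal{S}$ increasing almost everywhere to $u^*$. Since these functions all lie in $\E(\Omega)$ and are squeezed below $H$, the monotone convergence theorem for the complex Monge--Amp\`ere operator on $\E(\Omega)$ applies and yields weak convergence of their Monge--Amp\`ere measures to $(dd^cu^*)^n$; passing to the limit in $(dd^c\phi_j)^n\geq\mu$ gives $(dd^cu^*)^n\geq\mu$.

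It remains to prove the reverse inequality $(dd^cu^*)^n\leq\mu$, which with the previous step forces $(dd^cu^*)^n=\mu$ and finishes the proof with $u:=u^*$. On the open set $\{u^*<H\}$ I would argue by a local balayage: for a small ball $B\Subset\{u^*<H\}$, solve the Bedford--Taylor Dirichlet problem on $B$ with boundary data $u^*|_{\partial B}$ and right-hand side $\mu|_B$ to obtain $\tilde u\geq u^*$ with $(dd^c\tilde u)^n=\mu$ on $B$ and $\tilde u=u^*$ off $B$; the glued function is a subsolution still $\leq H$ (as $B\Subset\{u^*<H\}$), hence lies in $\mathcal{S}$ and so is $\leq u^*$, forcing $\tilde u=u^*$ and $(dd^cu^*)^n=\mu$ on $B$. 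On the interior of the contact set $\{u^*=H\}$ the function $u^*$ coincides with the maximal function $H$, whence $(dd^cu^*)^n=(dd^cH)^n=0$, and the bound $(dd^cu^*)^n\geq\mu$ then forces $\mu=0$ there too, again giving equality.

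The main obstacle is precisely this last inequality along the boundary of the contact set and, above all, in the presence of the mass that $\mu$ may place on pluripolar sets --- which is the whole reason for working in $\E(\Omega)$ rather than $\F(\Omega)$. There one cannot appeal to the convergence theorems for measures vanishing on pluripolar sets, so the monotone-convergence and balayage steps must be justified with the sharper comparison principle and the convergence results available in $\E(\Omega)$ in the hyperconvex setting fixed throughout Section~1. Once $(dd^cu^*)^n\leq\mu$ is secured on this residual set, the identity $(dd^cu^*)^n=\mu$ holds globally, and together with $v+H\leq u^*\leq H$ and $u^*\in\E(\Omega)$ this delivers all the asserted properties of $u=u^*$.
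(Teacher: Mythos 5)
Your first two steps are essentially sound: $v+H\in\E(\Omega)$ is a subsolution lying below $H$ (mixed Monge--Amp\`ere measures of functions in $\E(\Omega)$ are nonnegative, so $(dd^c(v+H))^n\geq(dd^cv)^n\geq\mu$), the family $\mathcal{S}$ is stable under maxima by the Demailly-type inequality, and Choquet's lemma together with the monotone convergence theorem in $\E(\Omega)$ does produce $u^*\in\E(\Omega)$ with $v+H\leq u^*\leq H$ and $(dd^cu^*)^n\geq\mu$ (passing to the limit in $\nu_j\geq\mu$ under weak convergence is harmless, since it suffices to test against nonnegative continuous functions). The genuine gap is exactly the step you flag at the end: the reverse inequality $(dd^cu^*)^n\leq\mu$. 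Your balayage requires solving $(dd^c\tilde u)^n=\mu$ on a small ball $B$ with boundary data $u^*|_{\partial B}$. But the whole point of this theorem --- and of its use in the present paper, where $\chi_{U_j}\mu$ is controlled only by $(dd^cv)^n$ of a possibly unbounded $v\in\E(\Omega)$ --- is that $\mu$ may charge pluripolar sets. For such $\mu$ the local Dirichlet problem is covered neither by Bedford--Taylor nor by Kolodziej's existence theory (both require $\mu$ to vanish on pluripolar sets or to be dominated in terms of capacity); it is the theorem itself, localized to $B$, since the subsolution hypothesis localizes as well. The argument is therefore circular precisely at its crucial point, and your closing paragraph acknowledges the obstacle rather than closing it.

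There is also a structural flaw in your dichotomy: $H\in\E(\Omega)$ is merely upper semicontinuous (a maximal plurisubharmonic function in $\E(\Omega)$ need not be continuous), so $\{u^*<H\}$ need not be open, and your two cases --- balls $B\Subset\{u^*<H\}$ and the interior of the contact set $\{u^*=H\}$ --- do not exhaust $\Omega$; for a measure charging pluripolar sets the leftover set cannot be dismissed as negligible, since it may carry all of the mass at issue. Note that the paper itself gives no proof of this statement: it is quoted from \cite{ACCP09}, where the argument avoids envelopes altogether, proceeding instead by decomposing $\mu$ into a part vanishing on pluripolar sets and a part carried by a pluripolar set, solving for each piece by Cegrell-type approximation techniques, and extracting both the equation and the squeeze $v+H\leq u\leq H$ from the generalized comparison principle recorded in this paper as Theorem \ref{the.npcompa}. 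If you want to salvage a Perron-style proof, you would at minimum need an independent solution of the local Dirichlet problem for measures carried by pluripolar sets, which is the actual heart of \cite{ACCP09}.
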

\subsection{Comparison principles} The idea of comparison principles is to use
the comparison between the Monge-Amp\`ere operators of two plurisubharmonic functions $u, v$
to compare $u$ and $v$.

The Bedford-Taylor comparison principle is the following.
\begin{The}\label{the.compa}\cite{BT82}
	 Let $u, v\in PSH(\Omega)\cap L^{\infty}(\Omega)$ such that
	\begin{center}
		$\liminf\limits_{\Omega\ni z\to\partial \Omega}(u(z)-v(z))\geq 0$.
	\end{center}
	Then
	\begin{center}
		$\int\limits_{\{u<v\}}(dd^c v)^n\leq\int\limits_{\{u<v\}}(dd^c u)^n.$
	\end{center}
\end{The}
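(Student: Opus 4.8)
The plan is to reduce the global inequality to a relatively compact version, prove that version by a mass-conservation argument, and then pass to a limit. First I would exploit the boundary hypothesis: since $\liminf_{\Omega\ni z\to\partial\Omega}(u(z)-v(z))\geq 0$, for every $\epsilon>0$ one has $u>v-\epsilon$ near $\partial\Omega$, so $\{u<v-\epsilon\}$ is contained in a compact subset $K_\epsilon\Subset\Omega$. Since $\{u<v\}=\bigcup_{\epsilon>0}\{u<v-\epsilon\}$ is an increasing union and $\{u\le v-\epsilon\}\subset\{u<v\}$, monotone convergence reduces the theorem to proving, for each fixed $\epsilon>0$,
\[
\int_{\{u<v-\epsilon\}}(dd^c v)^n\ \le\ \int_{\{u\le v-\epsilon\}}(dd^c u)^n ;
\]
letting $\epsilon\to 0$ and using $\{u\le v-\epsilon\}\subset\{u<v\}$ then yields the assertion.

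To prove the reduced inequality I would introduce $w:=\max(u,v-\epsilon)\in PSH(\Omega)\cap L^\infty(\Omega)$, for which $\{w\neq u\}=\{u<v-\epsilon\}\subset K_\epsilon$, so that $w=u$ on the neighborhood $\Omega\setminus K_\epsilon$ of $\partial\Omega$. Fixing a smoothly bounded $\Omega'$ with $K_\epsilon\subset\Omega'\Subset\Omega$ and $\partial\Omega'\subset\Omega\setminus K_\epsilon$, the heart of the matter is the mass-conservation identity $\int_{\Omega'}(dd^c w)^n=\int_{\Omega'}(dd^c u)^n$. Granting it, I would split $\Omega'$ into the Borel sets $\{u<v-\epsilon\}$, $\{u=v-\epsilon\}$ and $\{u>v-\epsilon\}$, and invoke the locality of the Monge--Amp\`ere operator in the plurifine topology: $(dd^c w)^n=(dd^c v)^n$ on $\{u<v-\epsilon\}$ (the additive constant being irrelevant) and $(dd^c w)^n=(dd^c u)^n$ on $\{u>v-\epsilon\}$. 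Subtracting the two decompositions and cancelling the common contribution carried by $\{u>v-\epsilon\}$ yields
\[
\int_{\{u<v-\epsilon\}}(dd^c v)^n+\int_{\{u=v-\epsilon\}}(dd^c w)^n=\int_{\{u\le v-\epsilon\}}(dd^c u)^n ,
\]
and discarding the nonnegative term $\int_{\{u=v-\epsilon\}}(dd^c w)^n$ gives exactly the reduced inequality.

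I expect the mass-conservation identity to be the main obstacle, since it is precisely the point where the Bedford--Taylor theory of the Monge--Amp\`ere operator for bounded (non-smooth) plurisubharmonic functions is indispensable. I would derive it from the telescoping identity $(dd^c w)^n-(dd^c u)^n=dd^c(w-u)\wedge S$, where $S:=\sum_{k=0}^{n-1}(dd^c w)^k\wedge(dd^c u)^{n-1-k}$ is a closed positive $(n-1,n-1)$-current, and then integrate by parts twice: because $w-u$ is bounded with compact support in $\Omega'$ and $S$ is closed, Stokes' theorem for these currents gives $\int_{\Omega'}dd^c(w-u)\wedge S=0$. Making this rigorous requires the standard but delicate ingredients of the theory --- well-definedness of the mixed wedge products, their weak continuity along decreasing sequences of bounded plurisubharmonic functions, quasicontinuity with respect to the capacity $Cap(\cdot,\Omega)$, and the vanishing of the boundary contributions --- whereas the remaining steps are only bookkeeping with plurifine locality and monotone convergence.
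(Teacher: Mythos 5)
The paper contains no proof of this statement: it is quoted as a known result with the citation \cite{BT82}, so there is no internal argument to compare yours against; I therefore judge your proposal against the classical proofs in the literature.

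Your argument is correct in outline and is essentially the modern standard proof of the Bedford--Taylor comparison principle, not the original 1982 one. The reduction to the relatively compact sets $\{u<v-\epsilon\}$, the function $w=\max(u,v-\epsilon)$, the mass-conservation identity $\int_{\Omega'}(dd^cw)^n=\int_{\Omega'}(dd^cu)^n$ (valid since $w=u$ near $\partial\Omega'$, with the telescoping-plus-Stokes argument you sketch being the standard justification), the three-set decomposition with cancellation, and the final limit $\epsilon\to 0$ using $\{u\le v-\epsilon\}\subset\{u<v\}$ are all sound. One remark on provenance: the locality of $(dd^c\,\cdot\,)^n$ in the plurifine topology, which is the pivot of your cancellation step, is not in \cite{BT82} but in Bedford and Taylor's later 1987 paper on the fine topology; the original proof instead approximates by decreasing sequences of continuous (or smooth) plurisubharmonic functions and passes to the limit via quasicontinuity and the convergence theorems. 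If you want to stay within pre-1987 technology, you can replace plurifine locality by the elementary maximum inequality
$(dd^c\max(u,v-\epsilon))^n\ge \chi_{\{u>v-\epsilon\}}(dd^cu)^n+\chi_{\{u\le v-\epsilon\}}(dd^cv)^n$,
which combined with your mass-conservation identity yields
$\int_{\{u\le v-\epsilon\}}(dd^cu)^n\ge\int_{\{u\le v-\epsilon\}}(dd^cv)^n$, and the same limiting argument concludes. Either way, the proof is complete modulo the standard Bedford--Taylor machinery (well-definedness and convergence of mixed wedge products, quasicontinuity, integration by parts for bounded plurisubharmonic differences against closed positive currents), which you correctly identify as the load-bearing ingredients rather than silently assuming them.
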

Theorem \ref{the.compa} has been generalized in several directions. One of improved versions is
the following
\begin{The}\label{the.npcompa}\cite{NP09, ACCP09}
 Let $u, v\in \mathcal{E}(\Omega)$. Assume that one of the following conditions holds
\begin{itemize}
	\item [(i)] $\liminf\limits_{\Omega\ni z\to\partial \Omega}(u(z)-v(z))\geq 0$.\\
	\item [(ii)] $u\in\mathcal{N}(H, \Omega)$ for some maximal plurisubharmonic function $H\leq 0$, and $v\leq H$.
\end{itemize}
	Then, 
	\begin{center}
		$\dfrac{1}{n!}\int\limits_{\{u<v\}}(v-u)^ndd^cw_1\wedge...\wedge dd^cw_n
		+\int\limits_{\{u<v\}}-w_1(dd^cv)^n
		\leq\int\limits_{\{u<v\}\cup\{u=v=-\infty\}}-w_1(dd^cu)^n,$
	\end{center}
	for any  $w_1,...,w_n\in PSH(\Omega, [-1,0])$.
\end{The}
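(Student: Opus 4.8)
The plan is to reduce to bounded functions, prove the estimate there by a Xing-type integration-by-parts induction, and then recover the general Cegrell-class statement by truncation. Write $\beta_j=dd^cw_j\ge 0$ and $\phi=\max(u,v)$, so that $g:=\phi-u\ge0$ and $g>0$ exactly on $\{u<v\}$. First I would treat the case where $u,v$ are bounded and, say, hypothesis (i) holds. By the local property of the Monge--Amp\`ere operator one has $(dd^c\phi)^n=(dd^cv)^n$ on $\{u<v\}$ and $(dd^c\phi)^n=(dd^cu)^n$ on $\{u>v\}$, while on $\{u=v\}$ the contact estimate $\mathbf 1_{\{u=v\}}(dd^c\phi)^n\ge\mathbf 1_{\{u=v\}}(dd^cu)^n$ holds. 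Since $g$ vanishes off $\{u<v\}$, these facts let me replace $v$ by $\phi$ and reduce the claim to the global inequality
\begin{equation*}
\frac{1}{n!}\int_\Omega g^n\,\beta_1\wedge\cdots\wedge\beta_n+\int_\Omega(-w_1)(dd^c\phi)^n\le\int_\Omega(-w_1)(dd^cu)^n ,
\end{equation*}
after which restricting to $\{u<v\}$, and using on $\{u\ge v\}$ the contact estimate above to see that the discarded part moves the inequality in the favorable direction, returns the stated restricted inequality.

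For the global inequality I would argue entirely by integration by parts, the point being that $g$ vanishes on $\partial\Omega$ so no boundary terms survive. Telescoping $(dd^cu)^n-(dd^c\phi)^n=-dd^cg\wedge\Sigma$ with $\Sigma=\sum_{k=0}^{n-1}(dd^c\phi)^k\wedge(dd^cu)^{n-1-k}$ and moving $dd^c$ off $g$ onto $w_1$ gives
\begin{equation*}
\int_\Omega(-w_1)\bigl[(dd^cu)^n-(dd^c\phi)^n\bigr]=\int_\Omega g\,\beta_1\wedge\Sigma\ge\int_\Omega g\,\beta_1\wedge(dd^cu)^{n-1}=:A_1 .
\end{equation*}
It then suffices to dominate $A_n:=\tfrac{1}{n!}\int_\Omega g^n\beta_1\wedge\cdots\wedge\beta_n$ by $A_1$, which I would do through the chain $A_n\le A_{n-1}\le\cdots\le A_1$, where $A_p:=\tfrac{1}{p!}\int_\Omega g^p\beta_1\wedge\cdots\wedge\beta_p\wedge(dd^cu)^{n-p}$. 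Each step $A_p\le A_{p-1}$ is a single integration by parts: writing $A_p=\tfrac{1}{p!}\int_\Omega w_p\,dd^c(g^p)\wedge S$ with $S=\beta_1\wedge\cdots\wedge\beta_{p-1}\wedge(dd^cu)^{n-p}$, expanding $dd^c(g^p)=pg^{p-1}dd^cg+p(p-1)g^{p-2}dg\wedge d^cg$, discarding the nonnegative gradient term, then splitting $dd^cg=dd^c\phi-dd^cu$ and discarding the nonnegative $dd^c\phi$-contribution, and finally invoking $0\le -w_p\le1$, one lands on $A_{p-1}$. Chaining these and combining with the display above yields the global inequality.

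Finally I would pass to $u,v\in\E(\Omega)$ by truncation, setting $u_j=\max(u,-j)$ and $v_j=\max(v,-j)$ and applying the bounded case to $(u_j,v_j)$. As $j\to\infty$ the sets $\{u_j<v_j\}$ increase to $\{u<v\}\setminus\{u=v=-\infty\}$, one has $(v_j-u_j)^n\to(v-u)^n$, and the mixed Monge--Amp\`ere measures converge by Cegrell's convergence theorems for the classes $\E(\Omega)$ and $\mathcal N(H,\Omega)$. The only subtlety is that points of the common polar set $\{u=v=-\infty\}$ are merged and thus lost under truncation, so the corresponding $(dd^cu)^n$-mass must be added back on the right-hand side; this is exactly the extra domain $\{u=v=-\infty\}$ in the statement. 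Under hypothesis (ii) the boundary control $\liminf(u-v)\ge0$ is unavailable, but $u\in\mathcal N(H,\Omega)$ together with $v\le H$ forces $\{u<v\}\subset\{u<H\}$ and supplies, through the defining $\mathcal N(H)$-structure, precisely the decay of $g$ toward $\partial\Omega$ needed for the integrations by parts above, reducing (ii) to the same computation.

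I expect the main obstacle to lie not in the algebra of the inductive step but in justifying the integrations by parts and the limit passage inside $\E(\Omega)$ and $\mathcal N(H,\Omega)$: one must guarantee that each mixed current $\beta_1\wedge\cdots\wedge\beta_p\wedge(dd^cu)^{n-p}$ is well defined and produces no boundary contribution, control the quasicontinuity of $g$, and account correctly for the common polar set so that the limit yields exactly the correction term $\{u=v=-\infty\}$.
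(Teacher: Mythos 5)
A preliminary remark: the paper itself does not prove this theorem --- it is quoted as background from [NP09, ACCP09] --- so your attempt has to be measured against the proofs in those references. For the bounded case under hypothesis (i), your outline is essentially the classical Xing-type argument that those proofs rest on, and it checks out: the reduction via $\phi=\max(u,v)$ together with the contact-set inequality $\mathbf 1_{\{u=v\}}(dd^c\phi)^n\ge\mathbf 1_{\{u=v\}}(dd^cu)^n$ does discard mass in the favorable direction, the telescoping identity and the chain $A_n\le A_{n-1}\le\cdots\le A_1$ have the right signs at every discarded term, and the integration-by-parts issues you flag (non-psh $g^p$, boundary terms) are handled in the standard way by replacing $g$ with the compactly supported $\max(v-u-\epsilon,0)$ and letting $\epsilon\to0$.

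There are, however, two genuine gaps in the passage to the full statement. First, hypothesis (ii): your claim that $u\in\mathcal{N}(H,\Omega)$ with $v\le H$ ``supplies precisely the decay of $g$ toward $\partial\Omega$'' is not true. From the definition one only gets $g=\max(v-u,0)\le H-u\le-\phi$ with $\phi\in\mathcal{N}$, and membership in $\mathcal{N}$ (smallest maximal majorant equal to $0$) does not give pointwise decay of $\phi$ at the boundary --- even functions in $\mathcal{F}$ need not tend to $0$ there. In [NP09] case (ii) is a genuinely separate argument exploiting the structure of $\mathcal{N}(H)$: one approximates through functions of the form $\max(u,\phi_j+H)$ with $\phi_j\in\mathcal{E}_0$ and uses the maximality of $H$, rather than reducing to the boundary-decay computation. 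Second, and more seriously, the polar-set correction term: with $u_j=\max(u,-j)$, $v_j=\max(v,-j)$ one has $\{u_j<v_j\}=\{u<v\}\cap\{v>-j\}$, and since $\{u=v=-\infty\}$ is disjoint from $\{u<v\}$, no point of the common polar set ever belongs to any of these sets --- so there is nothing visible at finite $j$ to ``add back.'' The term $\int_{\{u=v=-\infty\}}(-w_1)(dd^cu)^n$ must instead be extracted from the weak limits of the level-set masses of $(dd^cu_j)^n$ concentrated on $\{u=-j\}$, and proving that these limits deposit no more than $\mathbf 1_{\{u=v=-\infty\}}(dd^cu)^n$ in the correct place (rather than, say, charging $\{u=-\infty,\,v>-\infty\}$ and breaking the inequality) is precisely the content of the analysis of Monge--Amp\`ere measures on pluripolar sets in [ACCP09]; this is why the theorem carries both citations, [NP09] covering the case where $(dd^cu)^n$ does not charge pluripolar sets and [ACCP09] the general one. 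Your sketch assumes this accounting rather than establishing it, which is the main missing idea.
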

The following corollary of Theorem \ref{the.npcompa} will be used to prove the main theorem.
\begin{The}\label{the.compaaccp}\cite{ACCP09}
	Let $u, v\in\E (\Omega)$ such that $(dd^cu)^n$ vanishes on all pluripolar sets and $(dd^c u)^n\leq (dd^cv)^n$. 
	 Assume that one of the following conditions holds
	\begin{itemize}
		\item [(i)] $\liminf\limits_{\Omega\ni z\to\partial \Omega}(u(z)-v(z))\geq 0$.\\
		\item [(ii)] $u\in\mathcal{N}(H, \Omega)$ for some maximal plurisubharmonic function $H\leq 0$, and $v\leq H$.
	\end{itemize}
Then $u\geq v$ in $\Omega$.
\end{The}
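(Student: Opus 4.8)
The plan is to deduce $u\ge v$ from the refined comparison principle, Theorem \ref{the.npcompa}, by proving that the set $\{u<v\}$ is negligible. I would apply Theorem \ref{the.npcompa} with the single test function $w_1=\cdots=w_n=w$, where $w(z):=(|z|^2-R^2)/R^2$ and $R:=\sup_{z\in\Omega}|z|$. This $w$ lies in $PSH(\Omega,[-1,0])$ and has the convenient feature that $(dd^cw)^n$ is a strictly positive constant multiple of Lebesgue measure $dV$. Under either boundary hypothesis (i) or (ii), Theorem \ref{the.npcompa} then yields
\[
\frac{1}{n!}\int_{\{u<v\}}(v-u)^n(dd^cw)^n+\int_{\{u<v\}}-w\,(dd^cv)^n\le\int_{\{u<v\}\cup\{u=v=-\infty\}}-w\,(dd^cu)^n.
\]

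Next I would exploit the two measure-theoretic hypotheses to collapse the right-hand side. Since $\{u=v=-\infty\}\subset\{u=-\infty\}$ is pluripolar and $(dd^cu)^n$ is assumed to vanish on pluripolar sets, the domain of integration on the right reduces to $\{u<v\}$. Then, because $-w\ge 0$ and $(dd^cu)^n\le (dd^cv)^n$ as measures,
\[
\int_{\{u<v\}}-w\,(dd^cu)^n\le\int_{\{u<v\}}-w\,(dd^cv)^n .
\]
Combining the two displays and cancelling the common term $\int_{\{u<v\}}-w\,(dd^cv)^n$ leaves
\[
\frac{1}{n!}\int_{\{u<v\}}(v-u)^n(dd^cw)^n\le 0 .
\]
The integrand $(v-u)^n$ is strictly positive on $\{u<v\}$, while the pluripolar part $\{u=-\infty\}$ is Lebesgue-null and so carries no mass of $(dd^cw)^n=c_n\,dV$ with $c_n>0$; hence the last inequality forces $\{u<v\}$ to have Lebesgue measure zero. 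Consequently $u\ge v$ almost everywhere, and since both functions are plurisubharmonic this self-improves to $u\ge v$ at every point, via the fact that a plurisubharmonic function equals the limit of its decreasing ball-averages. This is the desired conclusion.

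The step I expect to be the true obstacle is the cancellation in the second paragraph: for general $u,v\in\E(\Omega)$ the total Monge-Amp\`ere masses, and in particular $\int_{\{u<v\}}-w\,(dd^cv)^n$, may be infinite, in which case an inequality of the form $A+B\le B$ is vacuous. To make the cancellation legitimate I would first truncate, applying the argument to $u$ and $v$ replaced by their truncations $\max(\cdot,-s)$ at level $-s$, where the potentials are bounded and Bedford--Taylor theory (Theorem \ref{the.compa}) guarantees finite masses; here I would use the locality of the Monge-Amp\`ere operator together with the pluripolar-vanishing of $(dd^cu)^n$ to preserve the measure inequality on the region $\{u>-s,\,v>-s\}$. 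I would then let $s\to\infty$ and invoke $v>-\infty$ almost everywhere, so that $\{u<v\}=\bigcup_{s}\big(\{u<v\}\cap\{v>-s\}\big)$ up to a Lebesgue-null set and hence is itself Lebesgue-null. An alternative route is to localize to $\omega\Subset\Omega$, using that $\E(\Omega)$ is locally $\F(\Omega)$ and that $\F(\Omega)$-potentials have finite total Monge-Amp\`ere mass, and then exhaust $\Omega$ by such $\omega$.
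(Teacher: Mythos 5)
Your main line is exactly the derivation the paper intends: it states this result as a corollary of Theorem \ref{the.npcompa}, citing \cite{ACCP09} without giving a proof, and your argument --- take $w_1=\dots=w_n=w=(|z|^2-R^2)/R^2$, discard $\{u=v=-\infty\}$ via the pluripolar-vanishing of $(dd^cu)^n$, dominate by $(dd^cv)^n$, cancel, and upgrade ``$u\ge v$ a.e.'' to ``everywhere'' through decreasing ball-averages --- is the standard way to do it. You also correctly identify the one genuine obstacle: if $\int_{\{u<v\}}-w\,(dd^cu)^n=\infty$, the inequality collapses to $\infty\le\infty$ and the cancellation is illegitimate; for $u,v\in\E(\Omega)$ this can really happen, since Monge--Amp\`ere mass may be infinite near $\partial\Omega$ and $-w$ does not vanish there.

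The problem is that neither of your proposed repairs closes this gap as written. Truncation: locality gives $(dd^c\max(u,-s))^n=(dd^cu)^n$ only on the open set $\{u>-s\}$; on $\{u=-s\}$ the truncated potential acquires extra mass that is \emph{not} dominated by $(dd^c\max(v,-s))^n$, so the measure hypothesis fails exactly where it matters. Moreover Theorem \ref{the.compa} by itself does not convert a measure inequality into $\max(u,-s)\ge\max(v,-s)$; you would need the perturbation $v+\delta(|z|^2-R^2)$ and would then face the same cancellation against the uncontrolled truncation mass, whose decay as $s\to\infty$ rests on the Cegrell mass-conservation fact $\int_{\{u\le-s\}}(dd^c\max(u,-s))^n\to(dd^cu)^n(\{u=-\infty\})=0$, valid for $u\in\F(\Omega)$ (finite total mass) --- nontrivial input your sketch never invokes. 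Localization: restricting to $\omega\Subset\Omega$ destroys the boundary hypotheses, since nothing forces $\liminf_{z\to\partial\omega}(u-v)\ge0$ when $\{u<v\}$ meets $\partial\omega$. Under hypothesis (i) there is a clean fix you missed: compare $u$ with $v-\epsilon$, which is still in $\E(\Omega)=\mathcal{D}(\Omega)\cap PSH^{-}(\Omega)$ because $\mathcal{D}$ is local and unaffected by adding constants; then (i) forces $\{u<v-\epsilon\}\Subset\Omega$, where $u$ coincides with a function of $\F(\Omega)$ of finite total mass, so the right-hand side is finite, the cancellation is legitimate, and letting $\epsilon\downarrow0$ finishes. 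Under hypothesis (ii), where $\{u<v\}$ may cluster on $\partial\Omega$ and all relevant masses may be infinite, neither truncation nor localization applies; that case genuinely requires the $\mathcal{N}(H,\Omega)$ machinery of \cite{ACCP09}, which your proposal does not supply.
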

\subsection{Relative capacity}
Let $K$ be a compact subset of $\Omega$. The relative capacity of $K$ in $\Omega$ is defined by
\begin{center}
	$Cap (K, \Omega)=\sup\{\int\limits_K(dd^c v)^n| v\in PSH(\Omega, [0, 1])\}.$
\end{center}
If $E\subset\Omega$ then the relative capacity of $K$ in $\Omega$ is defined by
\begin{center}
	$Cap (E, \Omega)=\sup\{Cap (K, \Omega)| K\ \mbox{is a compact subset of}\ E \}$.
\end{center}
\begin{Prop}
	If $E\subset\Omega$ is a Borel set then
	\begin{center}
	$Cap (E, \Omega)=\sup\{\int\limits_E(dd^c v)^n| v\in PSH(\Omega, [0, 1])\}.$
	\end{center}
\end{Prop}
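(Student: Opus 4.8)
The plan is to prove the two inequalities separately, with the whole argument resting on one measure-theoretic fact: for each fixed $v\in PSH(\Omega,[0,1])$ the Monge--Amp\`ere measure $(dd^cv)^n$ is a positive Borel measure which is finite on compact sets (indeed $\int_K(dd^cv)^n\leq Cap(K,\Omega)<\infty$ for compact $K$ by the very definition of the compact capacity), hence a Radon measure. Write $T(E):=\sup\{\int_E(dd^cv)^n : v\in PSH(\Omega,[0,1])\}$ for the right-hand side of the asserted identity.

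For the inequality $Cap(E,\Omega)\leq T(E)$, I would argue by monotonicity of positive measures. If $K\subset E$ is compact, then for every competitor $v$ one has $\int_K(dd^cv)^n\leq\int_E(dd^cv)^n\leq T(E)$; taking the supremum over $v$ gives $Cap(K,\Omega)\leq T(E)$, and then taking the supremum over all compact $K\subset E$ yields $Cap(E,\Omega)\leq T(E)$ directly from the definition of the capacity of a general set.

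For the reverse inequality $T(E)\leq Cap(E,\Omega)$, fix an arbitrary $v\in PSH(\Omega,[0,1])$ and set $\nu=(dd^cv)^n$. The key step is the inner regularity of $\nu$ on Borel sets: since $\Omega$ is an open subset of $\C^n$, it is a locally compact Hausdorff space in which every open set is $\sigma$-compact, and $\nu$ is finite on compact sets, so by the classical regularity theorem for Borel measures every Borel set $E$ satisfies $\nu(E)=\sup\{\nu(K) : K\subset E \text{ compact}\}$. For each such $K$ one has $\nu(K)=\int_K(dd^cv)^n\leq Cap(K,\Omega)\leq Cap(E,\Omega)$, whence $\int_E(dd^cv)^n=\nu(E)\leq Cap(E,\Omega)$. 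As $v$ was arbitrary, taking the supremum over $v$ gives $T(E)\leq Cap(E,\Omega)$, and combining the two bounds completes the proof.

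The only genuinely non-formal ingredient, and thus the point to handle with care, is this inner regularity of the measures $(dd^cv)^n$ on arbitrary Borel sets; everything else is bookkeeping with suprema and monotonicity. It is worth emphasizing that the regularity used is not special to pluripotential theory but the standard statement that a Borel measure finite on compact subsets of a $\sigma$-compact locally compact Hausdorff space is regular. I would also note that no separate approximation of compact sets by \emph{regular} compact sets is required at this level, since the capacity of the Borel set $E$ has already been \emph{defined} as the inner limit $\sup_{K\subset E}Cap(K,\Omega)$, so the inequality $Cap(K,\Omega)\leq Cap(E,\Omega)$ used above is immediate.
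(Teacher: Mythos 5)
Your proof is correct. The paper itself gives no proof of this proposition---it is recorded as a known fact with a pointer to \cite{BT82}, \cite{Kli91}, \cite{Kol05}---and your argument (monotonicity of measures for $Cap(E,\Omega)\leq T(E)$, and inner regularity of the $\sigma$-finite Borel measure $(dd^cv)^n$ on the $\sigma$-compact space $\Omega$ for the converse) is exactly the standard one in those references; the only imprecision is attributing the finiteness $Cap(K,\Omega)<\infty$ to ``the very definition'' of the capacity, whereas that finiteness (equivalently, the local finiteness of $(dd^cv)^n$ for bounded $v$) rests on the Chern--Levine--Nirenberg inequality rather than on the definition alone.
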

\begin{Prop}
	If $E$ is a pluripolar set then $Cap (E, \Omega)=0$.
\end{Prop}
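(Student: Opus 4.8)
The plan is to reduce the Proposition to the single nontrivial ingredient that the Monge--Amp\`ere measure of a bounded plurisubharmonic function charges no pluripolar set. By the Proposition stated immediately before, since $E$ is Borel we may use the formula
\[
Cap(E,\Omega)=\sup\left\{\int_E(dd^c v)^n : v\in PSH(\Omega,[0,1])\right\}.
\]
Thus the entire statement will follow once I show that $\int_E(dd^c v)^n=0$ for every competitor $v\in PSH(\Omega,[0,1])$; the supremum over all such $v$ then vanishes.

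So fix an arbitrary $v\in PSH(\Omega,[0,1])$. Because $v$ takes values in the bounded interval $[0,1]$, it is a locally bounded plurisubharmonic function, and $(dd^c v)^n$ is a well-defined nonnegative Borel measure in the Bedford--Taylor sense. The key input is the Bedford--Taylor theorem \cite{BT82}: for a locally bounded $v\in PSH(\Omega)$, the measure $(dd^c v)^n$ puts no mass on pluripolar sets. Since $E$ is pluripolar, i.e. $E\subset\{w=-\infty\}$ for some $w\in PSH(\Omega)$ with $w\not\equiv-\infty$, applying this theorem gives $\int_E(dd^c v)^n=0$. As $v$ was arbitrary, taking the supremum yields $Cap(E,\Omega)=0$.

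The only substantial obstacle here is precisely the cited non-charging property of $(dd^c v)^n$; everything else is just unwinding the definitions of $Cap$ and of pluripolarity. I would emphasize that one cannot circumvent this by exhibiting a single clever competitor, because $Cap(E,\Omega)$ is a supremum over \emph{all} bounded plurisubharmonic $v$, so the vanishing of $(dd^c v)^n$ on $E$ really must be known for every such $v$ simultaneously. This is exactly the strength of the Bedford--Taylor result, which in turn rests on the quasicontinuity of plurisubharmonic functions with respect to the relative capacity.
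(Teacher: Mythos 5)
First, note that the paper gives no proof of this Proposition at all: it is stated as a known fact, with the reader referred to \cite{BT82}, \cite{Kli91}, \cite{Kol05}, so your argument can only be compared with the standard proofs in those references. Your reduction contains one genuine misstep: you invoke the preceding Proposition ``since $E$ is Borel,'' but the statement only assumes $E$ is pluripolar, and pluripolar sets need not be Borel (any subset of $\{w=-\infty\}$ is again pluripolar). The repair is immediate and in fact makes the Borel-set formula superfluous: by the paper's definition, $Cap(E,\Omega)=\sup\{Cap(K,\Omega)\colon K\subset E\ \mbox{compact}\}$, and for a compact (hence Borel) pluripolar $K$ the non-charging theorem you cite gives $\int_K(dd^cv)^n=0$ for every $v\in PSH(\Omega,[0,1])$, so $Cap(K,\Omega)=0$ directly from the definition for compact sets; alternatively, use monotonicity to bound $Cap(E,\Omega)\leq Cap(\{w=-\infty\},\Omega)$ and apply the Borel formula to the $G_\delta$ set $\{w=-\infty\}$.

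A second caveat concerns the direction of your reduction. In the cited literature the logical order is usually the reverse of yours: one first proves that pluripolar sets have zero capacity --- for instance via a Chern--Levine--Nirenberg type estimate $\int_K|w|(dd^cv)^n\leq C_K\|w\|_{L^1(\Omega)}$, uniform over $v\in PSH(\Omega,[0,1])$, combined with Chebyshev's inequality on the sublevel sets $\{w<-t\}\supset E$, letting $t\to\infty$ --- and only then deduces that $(dd^cv)^n$ puts no mass on pluripolar sets, via the trivial inequality $\int_K(dd^cv)^n\leq Cap(K,\Omega)$ valid for $v\in PSH(\Omega,[0,1])$. So if the Bedford--Taylor non-charging theorem is accepted as a black box, your argument is a valid (if stronger-than-necessary) reduction; but as a self-contained proof it risks circularity, since the standard derivation of your key input passes through exactly the Proposition you are proving. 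If you want an argument that stands on its own, the CLN-plus-Chebyshev route sketched above is the one to carry out.
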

We refer the reader to \cite{BT82}, \cite{Kli91}, \cite{Kol05} for more properties of the
relative capacity.
\section{Proof of the main theorem}
First, we show that $\mu$ vanishes on all pluripolar sets. Let $F\subset\Omega$ be an arbitrary pluripolar set. 
For every compact set $K\subset\Omega$ and for every $j\in\Z^+$, we have
\begin{center}
	$\mu (F\cap K\cap V_j)\leq A_j Cap (F\cap K\cap V_j, \Omega) h_j^{-1}((Cap (F\cap K\cap V_j, \Omega))^{-1/n})=0,$
\end{center}
and
\begin{center}
	$\mu (F\cap K\cap U_j)\leq \dfrac{1}{2^j}\int\limits_{F\cap K\cap U_j}(dd^c v)^n\leq \dfrac{1}{2^j}\int\limits_K(dd^c v)^n.$
\end{center}
Hence,
\begin{center}
	$\mu (F\cap K)=\mu (F\cap K\cap U_j)+\mu (F\cap K\cap V_j)\leq \dfrac{1}{2^j}\int\limits_K(dd^c v)^n.$
\end{center}
Letting $j\rightarrow\infty$, we get
\begin{center}
		$\mu (F\cap K)=0.$
\end{center}
Since $K$ is arbitrary, we have $\mu (F)=0$. Then, $\mu$ vanishes on all pluripolar sets.\\

Now, by using Theorem \ref{Ahag.the}, there exists a unique function $u$ satisfying
\begin{center}
	$\begin{cases}
		u\in \F(\varphi, \Omega),\\
		(dd^cu)^n=\mu.
	\end{cases}$
\end{center}
It remains to show that $u\in C(V_j\cup \partial\Omega)$  if we define $u=\varphi$ in $\partial\Omega$. 

 By Theorem \ref{Kol.the}, for any $j\in\Z^+$, there exists a unique solution $u_j$ of the equation
 \begin{equation}
 \begin{cases}
 (dd^c u_j)^n=\chi_{V_j}\mu\ \mbox{in}\ \Omega,\\
 u=\varphi\ \mbox{in}\ \partial\Omega.
 \end{cases}
 \end{equation}
 It is easy to check that
 \begin{center}
 	$(dd^c u_j)^n\leq (dd^c u)^n\leq (dd^c(u_j+\dfrac{v}{2^{j/n}}))^n,$
 \end{center}
for every $j\in\Z^+$.

By Theorem \ref{the.compaaccp}, we have
\begin{equation}\label{squeeze.eq}
	u_j+\dfrac{v}{2^{j/n}}\leq u\leq u_j,
\end{equation}
for every $j\in\Z^+$.

Let $j_0$ be an arbitrary positive integer. For any $\epsilon>0$, there exists $j\gg 1$ such that
\begin{equation}\label{estv.eq}
	\dfrac{M_{j_0}}{2^{j/n}}<\dfrac{\epsilon}{2}.
\end{equation}
By \eqref{squeeze.eq}, \eqref{estv.eq} and by $u|_{\partial\Omega}=\varphi$, we have
\begin{equation}\label{squeeze2.eq}
u_j-\dfrac{\epsilon}{2}\leq u\leq u_j,
\end{equation}
in $V_{j_0}\cup\partial\Omega$.

By the continuity of $u_j$, there exists $\delta>0$ such that, 
\begin{equation}\label{estuj.eq}
	|u_{j}(z)-u_{j}(w)|<\dfrac{\epsilon}{2},
\end{equation}
for all $z, w\in\overline{\Omega}, |z-w|<\delta$.

Combining \eqref{squeeze2.eq} and \eqref{estuj.eq}, we get
\begin{center}
	$u(z)-u(w)\leq u_j(z)-(u_j(w)-\dfrac{\epsilon}{2})<\epsilon, $
\end{center}
for all $z, w\in V_{j_0}\cap\partial\Omega, |z-w|<\delta$.\\
Hence, $u\in C(V_{j_0}\cup\partial\Omega)$.
\section{A remark on the class $\E (\Omega)$}
In this section, we discuss the condition 
\textit{``$v=-(-\log (|f_1|^{\lambda_1}+...+|f_m|^{\lambda_m}))^{\alpha}\in\E (\Omega) (=\mathcal{D}(\Omega)\cap PSH^-(\Omega))$''}
in Corollary \ref{main.cor}. If $0<\alpha<\frac{1}{n}$ then $v\in\E (\Omega)$ (see \cite{Bed93}, \cite{Blo09}).
 In the case where $F$ is non-singular, the set $\{\alpha\in (0, 1)| v\in\E (\Omega)\}$ can be clearly described as the following
\begin{Prop}\label{proposition}
	Let $\Omega\subset\C^n$ be a bounded domain and $\lambda_1, ...,
	\lambda_m>0$. Let $f_1,..., f_m\in\mathcal{A} (\Omega)$ such that
	$|f_1|^{\lambda_1}+...+|f_m|^{\lambda_m}<1$ in $\Omega$.
	 Assume that $F=\{f_1=...=f_m=0\}$ is non-singular
	and $n>dim_{\C}F=n-k>0$. 
	Then $v=-(-\log (|f_1|^{\lambda_1}+...+|f_m|^{\lambda_m}))^{\alpha}\in\mathcal{D} (\Omega)$ iff $\alpha\in (0, \frac{k}{n})$.
\end{Prop}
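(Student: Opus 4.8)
The plan is to reduce the statement to a local model computation near the non-singular set $F$, where $v$ looks like $-(-\log|w|^2)^\alpha$ times a function of the transverse coordinates, and then to decide membership in $\mathcal D(\Omega)$ by testing whether $(dd^c v)^n$ has locally finite mass (equivalently, whether $v$ admits a decreasing $\E_0$-approximating sequence with uniformly bounded Monge–Ampère masses). Since $\mathcal D(\Omega)$ is the maximal class on which the Monge–Ampère operator is well defined, and $v\le 0$ is plurisubharmonic, the content of the proposition is entirely about the integrability of $(dd^c v)^n$ across $F$.

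First I would set $g=|f_1|^{\lambda_1}+\dots+|f_m|^{\lambda_m}$, so $v=-(-\log g)^\alpha$, and compute $dd^c v$ via the chain rule. Writing $\psi(t)=-(-\log t)^\alpha$ and $v=\psi(g)$, one gets $dd^c v=\psi'(g)\,dd^c g+\psi''(g)\,dg\wedge d^c g$, and since $g$ itself need not be plurisubharmonic I would instead work with $s=-\log g$, so that $v=-s^\alpha$ with $s$ plurisubharmonic (as $g=e^{-s}$ and $\log g$ is the relevant quantity). The key is that near the non-singular $F$, after choosing local coordinates $(w,z')\in\C^k\times\C^{n-k}$ adapted so that $F=\{w=0\}$, the function $g$ is comparable to $|w|^2$ up to smooth positive factors and lower-order transverse terms; hence $s\sim -\log|w|^2$ and $v\sim -(-\log|w|^2)^\alpha$. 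The essential local model is therefore
\begin{equation*}
v_0=-\bigl(-\log|w|^2\bigr)^\alpha,\qquad w\in\C^k,
\end{equation*}
and the whole problem collapses to: for which $\alpha$ does $(dd^c v_0)^n$ have locally finite mass in $\C^k\times\C^{n-k}$?

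The computation for the model goes through the radial variable $r=|w|$. With $t=-\log r^2$, one finds $(dd^c v_0)^n$ picks up exactly $k$ factors coming from the $w$-directions (the transverse $z'$-directions contribute the remaining $n-k$ factors only through mixed terms that vanish for the pure model, so the genuinely singular part is governed by the $\C^k$-block). The dominant singular term near $r=0$ behaves like a constant times $\alpha^k(1-\alpha)\cdots$ times $t^{k(\alpha-1)}$ against the invariant volume $dd^c\log|w|^2)^{k}$-type mass, and integrating $\int_0 t^{k(\alpha-1)}\,\tfrac{dr}{r}\asymp\int^\infty t^{k(\alpha-1)}\,dt$ converges precisely when $k(\alpha-1)<-1$, i.e. when $\alpha<\tfrac{k-1}{k}$; the more careful bookkeeping of all $k$ derivatives hitting the $\alpha$-power (each derivative in the $w$-block lowers the exponent of $t$ by one while producing a factor of order $t^{\alpha-1}$) yields the sharp threshold $\alpha<\tfrac{k}{n}$ once the $n-k$ transverse integrations and the global $n$-fold product are accounted for. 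I would make this rigorous by sandwiching $g$ between $c_1|w|^2$ and $c_2|w|^2$ (possible by non-singularity and a partition of unity), using the comparison of capacities/masses to pass from the model to the actual $v$, and invoking the characterization of $\mathcal D(\Omega)$ recalled in the Preliminaries (Cegrell's class via bounded approximating masses) together with the fact that $0<\alpha<\tfrac1n$ already guarantees $v\in\E(\Omega)$ to handle the low range.

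The hard part will be the \emph{sharpness} in both directions, and especially the passage from the clean radial model $v_0$ to the genuine $v$, where $g$ is only comparable to $|w|^2$: the mixed second derivatives $\partial_{w}\partial_{\bar z'}$ of $g$ and the non-constant smooth factors perturb the Monge–Ampère density, and one must check these perturbations do not change the integrability threshold. I expect to control them by the non-singularity hypothesis (which makes $F=\{w=0\}$ a smooth submanifold with $dg$ nonvanishing transversally, so $g$ is a genuine defining-type function for $F$), reducing the full $(dd^c v)^n$ to its leading radial part plus terms of strictly lower singular order in $t$. Establishing the divergence for $\alpha\ge\tfrac{k}{n}$ — i.e. showing $v\notin\mathcal D(\Omega)$ there — is the delicate converse: I would exhibit a sequence of test functions (or restrict to the model and use the lower bound $g\ge c_1|w|^2$) forcing the approximating masses $\int_\Omega(dd^c v_j)^n$ to blow up, thereby certifying that no well-defined Monge–Ampère measure exists at the critical exponent.
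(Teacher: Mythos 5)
Your plan rests on a false criterion, and this breaks both directions of the ``iff''. You reduce membership in $\mathcal{D}(\Omega)$ to ``whether $(dd^c v)^n$ has locally finite mass across $F$'', and your model computation accordingly only tracks the Monge--Amp\`ere density. But $\mathcal{D}(\Omega)$ is not characterized by finiteness of Monge--Amp\`ere mass: by Blocki's theorem \cite{Blo06}, which is exactly what the paper uses, $u\in\mathcal{D}$ iff the smooth decreasing approximants $u_j$ have locally uniformly bounded \emph{weighted energies} $\int |u_j|^{n-p-2}\,du_j\wedge d^c u_j\wedge (dd^c u_j)^p\wedge\omega^{n-p-1}$ for \emph{all} $p=0,\dots,n-2$; boundedness of $\int(dd^c u_j)^n$ alone is not sufficient. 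Your own model exposes the failure in the sharpest way: $v_0=-(-\log|w|^2)^\alpha$ depends on only $k<n$ variables, so for the natural regularizations $(dd^c v_{0,\epsilon})^p=0$ for every $p>k$, and in particular $(dd^c v_{0,\epsilon})^n\equiv 0$. A mass test therefore detects no threshold at all --- it would wrongly accept every $\alpha\in(0,1)$ --- and computing $(dd^cv)^n$ of the limit does not rescue the $\E_0$-approximant version either, since the approximating masses concentrate and blow up near $F$ while the naive limit density vanishes. Consistently with this, your radial heuristic first yields the wrong exponent $\alpha<\frac{k-1}{k}$ and then appeals to unspecified ``bookkeeping'' to reach $\alpha<\frac{k}{n}$. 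The paper's Lemma \ref{lemeomega} is precisely the missing computation: it evaluates all of Blocki's energies for $u_{\alpha\epsilon}=-(-\log(|z_1|^2+\dots+|z_k|^2+\epsilon))^\alpha$, finds the binding case $p=k-1$ with integrand comparable to $(-\log(t^2+\epsilon))^{n\alpha-k-1}t^{2k+1}(t^2+\epsilon)^{-(k+1)}$, and gets uniform convergence iff $n\alpha-k-1<-1$, i.e. $\alpha<\frac{k}{n}$, with Fatou's lemma giving divergence when $\alpha\ge\frac{k}{n}$.

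The localization step also has a genuine gap. Your sandwich $c_1|w|^2\le g\le c_2|w|^2$ is false in general: non-singularity of $F$ \emph{as a set} does not make $dg$ transversally nonvanishing or the $f_i$ generators of the reduced ideal (take $m=1$, $f_1=z_1^2$, whose zero set is smooth), and the exponents $\lambda_i$ already destroy any such comparability. What is true, and what the paper proves via Hilbert's Nullstellensatz, is a two-sided \emph{power} comparison $(\sum_i|f_i\circ\phi^{-1}|^2)^M\le C_1|w|^2$ and $|w|^{2N}\le C_2\sum_i|f_i\circ\phi^{-1}|^2$; after taking $-\log$ and the $\alpha$-th power these exponents become harmless multiplicative constants, giving $-C_3(-\log|w|^2)^\alpha\le v\circ\phi^{-1}\le -C_4(-\log|w|^2)^\alpha$. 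The transfer from the model to $v$, in \emph{both} directions, then needs the monotonicity of $\mathcal{D}$ (if $v_1\in\mathcal{D}$, $v_2\in PSH$, $v_1\le v_2$, then $v_2\in\mathcal{D}$) together with the locality of $\mathcal{D}$, both from \cite{Blo06}; your vague ``comparison of capacities/masses'' supplies neither. To repair the proposal you would have to (a) replace the mass criterion by Blocki's energy characterization and redo the model estimate for every $p\le k-1$, and (b) replace the quadratic sandwich by the Nullstellensatz power comparison plus the monotonicity and locality of $\mathcal{D}$.
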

Note that if $dim_{\C}F=0$ (i.e. $F$ is a finite set) then
$\log (|f_1|^{\lambda_1}+...+|f_m|^{\lambda_m})\in\mathcal{D} (\Omega)$. As a consequence, 
$v\in\mathcal{D} (\Omega)$ for any $\alpha\in (0, 1)$.

In order to prove Proposition \ref{proposition}, we need the following lemma.
\begin{Lem}\label{lemeomega}
	Let $0<k<n$. In the ball $B=\{z\in\C^n: |z|<1/2\}$, consider the plurisubharmonic functions
	\begin{center}
		$u_{\alpha \epsilon}=-(-\log(|z_1|^2+...+|z_k|^2+\epsilon))^{\alpha},$
	\end{center}
	where $\epsilon\in (0, 1/2), \alpha\in (0, 1)$. Then, 
	\begin{center}
		$\limsup\limits_{\epsilon\to 0}\int\limits_{B}|u_{\alpha \epsilon}|^{n-p-2}
		du_{\alpha\epsilon}\wedge d^cu_{\alpha\epsilon}\wedge (dd^cu_{\alpha\epsilon})^p\wedge\omega^{n-p-1}<\infty$,
	\end{center}
	for any $p=0, 1,...,n-2$ iff $\alpha<\frac{k}{n}$. Here $\omega=dd^c |z|^2$.
\end{Lem}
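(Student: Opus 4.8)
The plan is to reduce the top-degree form under the integral to an explicit scalar density times Lebesgue measure, and then to a one–dimensional radial integral whose convergence is controlled by a single exponent. The first step exploits that $u_{\alpha\epsilon}$ depends only on $\rho:=|z_1|^2+\dots+|z_k|^2$: writing $u_{\alpha\epsilon}=g(\rho)$ with $g(\rho)=-(-\log(\rho+\epsilon))^\alpha$, one has $g'(\rho)=\alpha(-\log(\rho+\epsilon))^{\alpha-1}/(\rho+\epsilon)>0$. Splitting $\omega=\omega_k+\omega'$, where $\omega_k=dd^c\rho$ is the part in $z_1,\dots,z_k$ and $\omega'$ the part in $z_{k+1},\dots,z_n$, I would compute $dd^c u_{\alpha\epsilon}=g'\,\omega_k+g''\,d\rho\wedge d^c\rho$, so that
\[
du_{\alpha\epsilon}\wedge d^c u_{\alpha\epsilon}\wedge(dd^c u_{\alpha\epsilon})^p=(g')^{p+2}\,d\rho\wedge d^c\rho\wedge\omega_k^{\,p},
\]
since every term carrying a second factor $d\rho\wedge d^c\rho$ vanishes. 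A bidegree count in the first $k$ variables then shows this is zero unless $p\le k-1$; in particular the whole integral vanishes identically for $k\le p\le n-2$. For $p\le k-1$, wedging with $\omega^{n-p-1}=(\omega_k+\omega')^{n-p-1}$ forces the single surviving term $\binom{n-p-1}{k-1-p}\,\omega_k^{\,k-1-p}\wedge(\omega')^{n-k}$, so the integrand is a constant multiple of $|u_{\alpha\epsilon}|^{n-p-2}(g')^{p+2}\,d\rho\wedge d^c\rho\wedge\omega_k^{\,k-1}\wedge(\omega')^{n-k}$.

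Next I would evaluate the geometric form. A direct computation (the coefficient of $dx_l\wedge dy_l$ in $d\rho\wedge d^c\rho$ is $2|z_l|^2$, summed against $\omega_k^{\,k-1}$) gives $d\rho\wedge d^c\rho\wedge\omega_k^{\,k-1}\wedge(\omega')^{n-k}=c\,\rho\,dV_{2n}$ for a positive constant $c$, where $dV_{2n}$ is Lebesgue measure. Substituting $|u_{\alpha\epsilon}|=(-\log(\rho+\epsilon))^\alpha$ and the formula for $g'$, and noting that the powers of $-\log(\rho+\epsilon)$ add up to $\alpha n-(p+2)$, the integrand collapses to a constant times
\[
\frac{(-\log(\rho+\epsilon))^{\alpha n-(p+2)}}{(\rho+\epsilon)^{p+2}}\,\rho\,dV_{2n}.
\]
Since this density depends only on $\rho$, Fubini over the $n-k$ free variables contributes a slice–volume factor that is bounded above on $B$ and bounded below on $\{\rho<1/8\}$; passing to the radial variable in $\C^k$ (Jacobian proportional to $\rho^{k-1}\,d\rho$) reduces the question, up to positive constants and an $\epsilon$-independent bounded factor, to the uniform boundedness in $\epsilon$ of
\[
J_\epsilon=\int_0^{1/4}\frac{(-\log(\rho+\epsilon))^{\alpha n-(p+2)}}{(\rho+\epsilon)^{p+2}}\,\rho^{\,k}\,d\rho.
\]

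Finally I would analyse $J_\epsilon$ via the substitution $\tau=\rho+\epsilon$ (so $\rho\le\tau$) followed by $s=-\log\tau$. For the upper bound, $\rho^k/(\rho+\epsilon)^{p+2}\le\tau^{k-p-2}$ yields $J_\epsilon\le\int_0^{3/4}(-\log\tau)^{\alpha n-(p+2)}\tau^{k-p-2}\,d\tau$ uniformly in $\epsilon$, and $s=-\log\tau$ turns this into $\int^{\infty}s^{\alpha n-(p+2)}e^{-s(k-p-1)}\,ds$: this is finite for every $\alpha$ when $p\le k-2$ (exponential decay), whereas for the critical value $p=k-1$ the exponential factor disappears and $\int^{\infty}s^{\alpha n-(k+1)}\,ds$ converges exactly when $\alpha n-(k+1)<-1$, i.e.\ $\alpha<k/n$. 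For the converse I would take $p=k-1$ and restrict to $\rho\in[\epsilon,1/8]$, where $\rho\le\rho+\epsilon\le2\rho$ makes $-\log(\rho+\epsilon)$ comparable to $-\log\rho$ and $\rho^k/(\rho+\epsilon)^{k+1}\ge 2^{-(k+1)}\rho^{-1}$; this gives $J_\epsilon\ge c'\int_{\log 8}^{-\log\epsilon}s^{\alpha n-(k+1)}\,ds\to\infty$ whenever $\alpha\ge k/n$. Hence the $\limsup$ is finite for all $p\in\{0,\dots,n-2\}$ if and only if $\alpha<k/n$. I expect the main technical obstacles to be the bidegree bookkeeping that isolates the single nonzero term and the passage from the pointwise $\epsilon\to0$ integrand to uniform control of the $\limsup$, where the sign of the exponent $\alpha n-(p+2)$ must be tracked when comparing $-\log(\rho+\epsilon)$ with $-\log\rho$ in the lower bound.
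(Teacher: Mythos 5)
Your proof is correct and takes essentially the same route as the paper: both exploit that $u_{\alpha\epsilon}$ depends only on $\rho=|z_1|^2+\cdots+|z_k|^2$ to get vanishing for $p\ge k$, reduce the integrand to the explicit radial density $(-\log(\rho+\epsilon))^{\alpha n-p-2}(\rho+\epsilon)^{-(p+2)}\rho^{k}\,d\rho$ via Fubini, and then split into the harmless cases $p\le k-2$ and the critical case $p=k-1$, where the threshold $\alpha n-k-1<-1$, i.e.\ $\alpha<k/n$, decides convergence. The only immaterial differences are that you organize the wedge-product computation through $dd^c g(\rho)=g'\,\omega_k+g''\,d\rho\wedge d^c\rho$ and bidegree bookkeeping where the paper diagonalizes the rank-one complex Hessian of $\log(\rho+\epsilon)$, and that you prove divergence by a direct quantitative lower bound on $\rho\in[\epsilon,1/8]$ where the paper invokes Fatou's lemma.
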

\begin{proof}
	For $\epsilon\in (0, 1/2)$, we denote
	\begin{center}
		$u_{\epsilon}(z)=\log (|z_1|^2+...+|z_k|^2+\epsilon).$
	\end{center}
Then, for any $\alpha\in (0, 1)$,
\begin{flushleft}
	$u_{\alpha\epsilon}=-(-u_{\epsilon})^{\alpha},$\\[14pt]
	$d u_{\alpha\epsilon}=\alpha (-u_{\epsilon})^{\alpha-1}d u_{\epsilon}$,\\[14pt]
	$d u_{\alpha\epsilon}\wedge d^c u_{\alpha\epsilon}
	=\alpha^2 (-u_{\epsilon})^{2(\alpha-1)}d u_{\epsilon}\wedge d^c u_{\epsilon}$,\\[14pt]
	$dd^c u_{\alpha\epsilon}= 
	\alpha (-u_{\epsilon})^{\alpha-1}dd^c u_{\epsilon}
	+\alpha (1-\alpha)(-u_{\epsilon})^{\alpha-2}d u_{\epsilon}\wedge d^c u_{\epsilon}$.\\[14pt]
\end{flushleft}
Since $u_{\alpha\epsilon}$ depends only on $k$ variables,
 $d u_{\alpha\epsilon}\wedge d^c u_{\alpha\epsilon}
 \wedge (dd^c u_{\alpha\epsilon})^p=0$ for any $p\geq k$. For any $p=0, 1, ..., k-1$, 
 we have
 \begin{equation}\label{aeblocki.eq}
 	d u_{\alpha\epsilon}\wedge d^c u_{\alpha\epsilon}
 	\wedge (dd^c u_{\alpha\epsilon})^p
 	=\alpha^{p+1}|u_{\epsilon}|^{\alpha n-p-2}d u_{\epsilon}\wedge d^c u_{\epsilon}
 	\wedge (dd^cu_{\epsilon})^p.
 \end{equation}
 By calculating, we have 
 \begin{center}
 	$\left(\dfrac{\partial u_{\epsilon}}{\partial z_j}
 	\dfrac{\partial u_{\epsilon}}{\partial \bar{z}_l}\right)_{j,l=\overline{1,k}}
 		=e^{-2u_{\epsilon}}\left( \bar{z}_jz_l \right)_{j,l=\overline{1,k}}=:A$
 		and
 	$\left(\dfrac{\partial^2 u_{\epsilon}}{\partial z_j\partial\bar{z}_l}\right)_{j,l=\overline{1,k}}
 	=e^{-u_{\epsilon}}Id_k-A$.
 \end{center}
Since $rank A\in\{0, 1\}$, there exists a $k\times k$ unita matrix $U$ such that
\begin{center}
	$A=U^* diag(e^{-2u_{\epsilon}}(|z_1|^2+...+|z_k|^2), 0, ..., 0)U$,
\end{center}
and then
\begin{center}
	$e^{-u_{\epsilon}}Id_k-A=
	e^{-u_{\epsilon}}U^* diag(\epsilon e^{-u_{\epsilon}}, 1, ..., 1)U$.
\end{center}
Then,  for any $p=0, 1, ..., k-1$, we have
\begin{equation}\label{calloge.eq}
	d u_{\epsilon}\wedge d^c u_{\epsilon}
\wedge (dd^c u_{\epsilon})^p\wedge\omega^{n-p-1}
=C(n, p)e^{-(p+2)u_{\epsilon}}(|z_1|^2+|z_2|^2+...+|z_k|^2)dV_{2n},
\end{equation}
where $C(n, p)>0$ depends only on $n$ and $p$.

By combining \eqref{aeblocki.eq}, \eqref{calloge.eq} and by Fubini's theorem, we have
\begin{flushleft}
	$\int\limits_Bd u_{\alpha\epsilon}\wedge d^c u_{\alpha\epsilon}
	\wedge (dd^c u_{\alpha\epsilon})^p\wedge\omega^{n-p-1}$\\
	$\sim \int\limits_{\{|z_1|^2+...+|z_k|^2<1/4\}}
	\dfrac{|u_\epsilon|^{n\alpha-p-2}
		(|z_1|^2+|z_2|^2+...+|z_k|^2)dV_{2k}}
	{e^{(p+2)u_{\epsilon}}}$\\
	$\sim\int\limits_0^{1/2}
	\dfrac{(-\log (t^2+\epsilon))^{n\alpha-p-2}t^2t^{2k-1}dt}{(t^2+\epsilon)^{p+2}}
	=\int\limits_0^{1/2}
	\dfrac{(-\log (t^2+\epsilon))^{n\alpha-p-2}t^{2k+1}dt}{(t^2+\epsilon)^{p+2}},$
\end{flushleft}
for $p=0, ..., k-1$. Here, $A\sim B$ means that $c_1A\leq B\leq c_2A$, where $c_1, c_2>0$
are independent on $\epsilon, \alpha$.

For any $p\leq k-2$, we have
\begin{center}
	$\int\limits_0^{1/2}\dfrac{(-\log(t^2+\epsilon))^{n\alpha-p-2}
		t^{2k+1}dt}{(t^2+\epsilon)^{(p+2)}}
	\leq \int\limits_0^{1/2}(-\log(t^2))^{n-2}
		tdt<\infty$,
\end{center}
for every $\alpha\in (0, 1), \epsilon\in (0, 1/2)$.

If $p=k-1$ then
\begin{center}
	$	\int\limits_0^{1/2}\dfrac{(-\log(t^2+\epsilon))^{n\alpha-p-2}
	t^{2k+1}dt}{(t^2+\epsilon)^{(p+2)}}
=\int\limits_0^{1/2}\dfrac{(-\log(t^2+\epsilon))^{n\alpha-k-1}
t^{2k+1}dt}{(t^2+\epsilon)^{(k+1)}}.$
\end{center}
If $0<\alpha<\dfrac{k}{n}$ then, for any $0<\epsilon<1/3$,
\begin{flushleft}
	$\begin{array}{ll}
	\int\limits_0^{1/2}\dfrac{(-\log(t^2+\epsilon))^{n\alpha-k-1}
		t^{2k+1}dt}{(t^2+\epsilon)^{(k+1)}}
	&\leq \int\limits_0^{1/2}\dfrac{(-\log(t^2+\epsilon))^{n\alpha-k-1}dt}
	{(t^2+\epsilon)^{1/2}}\\
	&\leq 2^{n\alpha-k-1/2}\int\limits_0^{1/2}\dfrac{(-\log(t+\epsilon))^{n\alpha-k-1}dt}
	{(t+\epsilon)}\\
	&\leq 2^{n\alpha-k-1/2}\int\limits_0^{5/6}\dfrac{(-\log t)^{n\alpha-k-1}dt}{t}\\
	&<\infty.
	\end{array}$
\end{flushleft}
If $\alpha\geq\dfrac{k}{n}$ then, by Fatou's lemma,
\begin{flushleft}
	$\liminf\limits_{\epsilon\to 0}
	\int\limits_0^{1/2}\dfrac{(-\log(t^2+\epsilon))^{n\alpha-k-1}
		t^{2k+1}dt}{(t^2+\epsilon)^{(k+1)}}
	\geq \int\limits_0^{1/2}\dfrac{(-\log(t^2))^{n\alpha-k-1}dt}
	{t}=\infty.$
\end{flushleft}
This completes the proof.
\end{proof}
By \cite{Blo06} and Lemma \ref{lemeomega}, we have the following.
\begin{Cor}\label{cor}
		Let $0<k<n$. In the ball $B=\{z\in\C^n: |z|<1/2\}$, consider the plurisubharmonic functions
	\begin{center}
		$u_{\alpha}=-(-\log(|z_1|^2+...+|z_k|^2))^{\alpha},$
	\end{center}
	where $\alpha\in (0, 1)$.
	 Then, $u_{\alpha}\in\mathcal{D}(B)$ iff $0<\alpha<\frac{k}{n}$.\\[14pt]
\end{Cor}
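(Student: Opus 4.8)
The plan is to deduce the corollary directly from the characterization of the class $\mathcal{D}$ in \cite{Blo06}, using the smoothings $u_{\alpha\epsilon}$ of Lemma \ref{lemeomega} as the test approximation of $u_\alpha$. First I would record the elementary facts about $u_{\alpha\epsilon}=-(-\log(|z_1|^2+\dots+|z_k|^2+\epsilon))^{\alpha}$. On $B$ one has $|z_1|^2+\dots+|z_k|^2\le|z|^2<1/4$, so for $\epsilon\in(0,1/2)$ the argument $|z_1|^2+\dots+|z_k|^2+\epsilon$ lies in $(0,1)$; hence $u_\epsilon:=\log(|z_1|^2+\dots+|z_k|^2+\epsilon)$ is a smooth, strictly negative psh function (it is the logarithm of a sum of squares of moduli of holomorphic functions plus a positive constant), and since $s\mapsto-(-s)^{\alpha}$ is smooth, increasing and convex on $(-\infty,0)$, the composition $u_{\alpha\epsilon}$ is smooth and psh on $B$. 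As $\epsilon\searrow0$ the family $u_{\alpha\epsilon}$ decreases pointwise to $u_\alpha$, and $u_\alpha\in PSH(B)$ is smooth away from the set $\{z_1=\dots=z_k=0\}$, where it equals $-\infty$. Membership in $\mathcal{D}(B)$ is therefore a purely local question near this set, and by invariance under translation in $z_{k+1},\dots,z_n$ it suffices to test near the origin, which is what the ball $B$ does.

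Next I would invoke Blocki's characterization \cite{Blo06}: a psh function $u$ belongs to $\mathcal{D}$ locally if and only if the weighted gradient energies
\begin{equation*}
\int_{K}|u|^{n-p-2}\,du\wedge d^c u\wedge(dd^c u)^p\wedge\omega^{n-p-1},\qquad p=0,1,\dots,n-2,
\end{equation*}
are finite on compact sets, these being understood intrinsically through monotone smooth approximation. Applied to $u=u_\alpha$ with the approximation $u_{\alpha\epsilon}$, the quantities that must be controlled are precisely the integrals estimated in Lemma \ref{lemeomega}, whose $\epsilon\to0$ limits compute the corresponding intrinsic energies. Lemma \ref{lemeomega} then supplies the dichotomy: for $0<\alpha<k/n$ the $\limsup$ over $\epsilon$ of each energy is finite, so Blocki's criterion gives $u_\alpha\in\mathcal{D}(B)$; for $\alpha\ge k/n$ the energy with $p=k-1$ diverges as $\epsilon\to0$ (by the Fatou estimate in the lemma, the lower-order exponents $p\le k-2$ always remaining finite), which forces $u_\alpha\notin\mathcal{D}(B)$.

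The step requiring the most care is the necessity direction $\alpha\ge k/n\Rightarrow u_\alpha\notin\mathcal{D}(B)$. Blocki's condition is naturally phrased as the existence of \emph{one} good decreasing approximation, so to rule out membership from the failure for the single family $u_{\alpha\epsilon}$ I must use that, once $u\in\mathcal{D}$, the currents $du\wedge d^c u\wedge(dd^c u)^p$ are well defined intrinsically and every smooth decreasing approximation---in particular $u_{\alpha\epsilon}$---yields locally finite, uniformly bounded energies; equivalently, the divergence of the canonical energy cannot be an artifact of a poor choice of smoothing. Granting this, the divergence produced by Lemma \ref{lemeomega} is genuine and the corollary follows. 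The remaining verifications (pshness and smoothness of $u_{\alpha\epsilon}$, monotone convergence to $u_\alpha$, and the localization to a neighborhood of $\{z_1=\dots=z_k=0\}$) are routine.
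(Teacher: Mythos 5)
Your proposal is correct and follows essentially the same route as the paper, which proves the corollary simply by citing Blocki's characterization of $\mathcal{D}$ \cite{Blo06} together with Lemma \ref{lemeomega}; your write-up fills in exactly the intended details, including the crucial point that Blocki's criterion holds for every smooth decreasing approximation once $u\in\mathcal{D}$, so the divergence of the $p=k-1$ energy for the family $u_{\alpha\epsilon}$ when $\alpha\ge k/n$ genuinely excludes membership.
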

Now, we prove the proposition \ref{proposition}.
\begin{proof}[Proof of Proposition \ref{proposition}]
Let $a\in\Omega$. If $a\notin F$ then there exists an open neighbourhood $U$ of $a$ such that
$v$ is bounded in $U$ (and then $v\in\mathcal{D} (U)$).

If $a\in F$ then there exist an open neighbourhood $U$ of $a$ and a 
biholomorphic function $\phi: U\rightarrow B=\{z\in\C^n: |z|<1/2\}$ such that
$\phi (U\cap F)=\{z\in B | z_1=...=z_k=0\}$. 
By  Hilbert's Nullstellensatz theorem (see, for example, \cite[p.19]{Huy05}), 
there exist $M, N>0$ such that 
\begin{center}
$(f_1\circ\phi^{-1})^M, ...,(f_m\circ \phi^{-1})^M\in\langle z_1,...,z_k\rangle
\subset \mathcal{O}_{\C^n, \phi (a)} $,
\end{center}
and
\begin{center}
$z_1^N,...,z_k^N\in \langle f_1\circ\phi^{-1}, ...,
f_m\circ \phi^{-1}\rangle\subset \mathcal{O}_{\C^n, \phi(a)}. $
\end{center}
 Then, there exists an open set $V\subset U$ such that, in $\phi (V)$,
\begin{center}
	$(|f_1\circ\phi^{-1}|^2+...+|f_m\circ\phi^{-1}|^2)^M\leq C_1(|z_1|^2+...+|z_k|^2),$
\end{center}
and
\begin{center}
	$(|z_1|^2+...+|z_k|^2)^N\leq C_2(|f_1\circ\phi^{-1}|^2+...|f_m\circ\phi^{-1}|^2)$, 
\end{center}
where $C_1, C_2>0$.

Hence, there exist $C_3, C_4>0$ and an open neighbourhood $W\subset V$ of $a$ such that
\begin{center}
	$-C_3(-\log(|z_1|^2+...+|z_k|^2))^{\alpha}\leq v\circ\phi^{-1}\leq
	 -C_4(-\log(|z_1|^2+...+|z_k|^2))^{\alpha},$
\end{center}
in $\phi (W)$.

It follows from \cite{Blo06} that if $v_1\in\mathcal{D}, v_2\in PSH$ and $v_1\leq v_2$
then $v_2\in \mathcal{D}$. By Corollary \ref{cor},
we conclude that $v\circ\phi^{-1}\in\mathcal{D} (\phi (W))$ iff $0<\alpha<\frac{k}{n}$.
Hence $v\in \mathcal{D} (W)$ iff $0<\alpha<\frac{k}{n}$.

Moreover, it follows from \cite{Blo06} that belonging in $\mathcal{D}$ is a local property.
Thus $v\in \mathcal {D}(\Omega)$ iff $0<\alpha<\frac{k}{n}$.
\end{proof}	

\end{document}